\theoremstyle{plain}
\newtheorem{thm}{Theorem}[section]
\newtheorem{lem}[thm]{Lemma}
\newtheorem{cor}[thm]{Corollary}
\newcommand{\mc}[1]{\mathcal{#1}}
\theoremstyle{remark}
\theoremstyle{definition}
\newtheorem{definition}[thm]{Definition}
\newtheorem{oss}[thm]{Remark}
\newtheorem*{notation}{Notation}
\numberwithin{equation}{section}
\title{On the correspondence between Ulrich bundles and curves on surfaces}
\author{Sofia Bordoni*}
\thanks{* Ph.D. student in Mathematics, Sapienza University of Rome, Rome, Italy. \\ Email: sofia.bordoni@uniroma1.it}
\begin{document}
	\maketitle
	\begin{abstract}
		This work provides a curve-based approach to Ulrich bundles on surfaces, establishing a correspondence that characterizes their existence, with a focus on applications to surfaces in $\mathbb{P}^3$. 
	\end{abstract}
	\section{Introduction}
	Ulrich bundles, first introduced in commutative algebra in the 1980s \cite{ulrich1984gorenstein, brennan1987maximally}, also have a natural geometric counterpart: they arise in connection with the \emph{determinantal representation problem}, concerning the question of whether the defining equation of a hypersurface can be expressed as the determinant of a matrix of linear forms (see \cite{beauville2018introduction} for further details).
	
	Many constructions and examples have appeared in the literature (see, e.g., \cite{eisenbud2003resultants, beauville2018introduction}); nevertheless, the general \textit{existence problem} remains largely open, even for low-dimensional varieties such as surfaces. In the case of surfaces, some useful results have already been presented: for instance, in \cite{casnati2019ulrich}, Ulrich bundles were characterized in terms of their Chern classes and only two cohomological vanishings, providing a structural understanding in dimension two.
	
	The aim of this article is to advance this line of research by developing \textit{a curve-based approach} to Ulrich bundles on smooth projective surfaces, leading to new existence results and geometric applications. Specifically, it is proved that an Ulrich bundle on a surface exists if and only if it arises as a \textit{Lazarsfeld-Mukai bundle} associated with a triple $(C,W,\mathcal{L})$, where $C$ is a smooth curve lying on the surface, and $(W, \mathcal{L})$ is a linear series on $C$, satisfying some explicit conditions. The precise statement is given in the following theorem. 
	\begin{thm} \label{main}
		Let \( S \subset \mathbb{P}^N \) be a smooth projective surface of degree $d\geq 2$, embedded by the linear system $|H|$, where $H\in |\mc{O}_S(1)|$. Then, there exists an Ulrich bundle $\mc{E}$ of rank $r$ on $S$ if and only if there exists a smooth (possibly disconnected) curve $C \subset S $ of genus $g$ together with a pair $(W,\mc{L})$, where $\mc{L}$ is a line bundle on $C$ and $W\subseteq H^0(\mc{L})$ is a $r-$dimensional base-point free linear series, such that: 
		
		\begin{enumerate}[label=(\roman*)]
			\item $H^1(C, \mc{L}(K_{S}+H))=0$; 
			\item the multiplication map 
			\begin{center}
				$\varphi: W\otimes H^0(S,\mc{O}_{S}(K_S+2H)) \to H^0(\mc{L}(K_S+2H))$
			\end{center}
			is injective;
			\item $ \deg(C)= \frac{r}{2}(K_{S}+3H)\cdot H$ and \\[1ex] $\deg(\mc{L})= r\chi(\mc{O}_{S})+g-1-C\cdot K_S-rH^2$.
			\end{enumerate} 
	\end{thm}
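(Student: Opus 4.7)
My plan is to prove the correspondence by realizing every Ulrich bundle as a Lazarsfeld--Mukai bundle attached to the curve data. Given a triple $(C,W,\mc{L})$ with $W$ base-point free, pushing the evaluation surjection $W\otimes\mc{O}_C\twoheadrightarrow\mc{L}$ forward along $\iota:C\hookrightarrow S$ yields a surjection $W\otimes\mc{O}_S\twoheadrightarrow\iota_*\mc{L}$, whose kernel is locally free of rank $r$ because $C$ is a smooth Cartier divisor (so $\iota_*\mc{L}$ has homological dimension $1$). I would set $\mc{E}:=(\ker)^{\vee}$, giving the short exact sequence
\[ 0\to\mc{E}^{\vee}\to W\otimes\mc{O}_S\to\iota_*\mc{L}\to 0,\qquad c_1(\mc{E})=[C]. \]
For the converse, starting from an Ulrich bundle $\mc{E}$, I would use that $\mc{E}$ is $0$-regular (the Ulrich vanishings $H^1(\mc{E}(-H))=H^2(\mc{E}(-2H))=0$ imply Castelnuovo--Mumford $0$-regularity), hence globally generated, and $\mu_H$-semistable with positive slope $\mu_H(\mc{E})=\tfrac12(K_S+3H)\cdot H>0$, which gives $H^0(\mc{E}^{\vee})=0$. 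Choosing $r$ generic sections of $\mc{E}$ and applying Bertini for degeneracy loci, the induced map $\mc{O}_S^{\,r}\to\mc{E}$ is injective with cokernel $\iota_*N$ for a line bundle $N$ on a smooth (possibly disconnected) curve $C\in|\det\mc{E}|$; dualizing recovers the Lazarsfeld--Mukai presentation with $\mc{L}:=N^{\vee}\otimes\mc{O}_C(C)$ and $W:=\mathrm{Im}\bigl(\mathbb{C}^r\to H^0(\mc{L})\bigr)$ of dimension $r$.

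The cohomological core of the argument is then the same in both directions. By Serre duality on $S$, the Ulrich vanishings $H^i(\mc{E}(-jH))=0$ for $i=0,1,2$ and $j=1,2$ translate into $H^i(\mc{E}^{\vee}(K_S+jH))=0$. Twisting the Lazarsfeld--Mukai sequence by $K_S+jH$ and using Kodaira vanishing to annihilate $H^{\geq 1}(\mc{O}_S(K_S+jH))$, each vanishing reduces either to the vanishing of $H^1(\mc{L}(K_S+jH))$ or to the bijectivity of the multiplication map $\varphi_j:W\otimes H^0(K_S+jH)\to H^0(\mc{L}(K_S+jH))$. For sufficiency, condition (i) yields the $j=1$ vanishing, which propagates to $j=2$ via the surjection $H^1(\mc{L}(K_S+H))\twoheadrightarrow H^1(\mc{L}(K_S+2H))$ induced by multiplying with a generic section of $\mc{O}_S(H)$; condition (ii) is injectivity of $\varphi_2$, and injectivity of $\varphi_1$ follows by the dual propagation (a nonzero element of $\ker\varphi_1$ tensored with a generic section of $H$ gives, by integrality, a nonzero element of $\ker\varphi_2$, contradicting (ii)); a direct Riemann--Roch check shows (iii) is precisely the numerical identity $\chi(\mc{E}^{\vee}(K_S+jH))=0$ for $j=1,2$, upgrading the injectivities to bijectivities. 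For necessity, the same bookkeeping applied to the dualized sequence converts $H^0(\mc{E}(-H))=0$ and $H^2(\mc{E}(-2H))=0$ directly into conditions (i) and (ii), while (iii) follows from $c_1(\mc{E})\cdot H=\tfrac{r}{2}(K_S+3H)\cdot H$ and $\chi(\mc{E}(-2H))=0$.

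I expect the main obstacles to be: (a) the Bertini-type argument in the $(\Rightarrow)$ direction, ensuring that the degeneracy locus $C$ is smooth and that the cokernel is a genuine line bundle on $C$ (rather than a torsion-free rank-$1$ sheaf), which is delicate because $C$ is allowed to be disconnected; and (b) the propagation arguments of the $(\Leftarrow)$ direction, which form the conceptual bridge explaining why the seemingly asymmetric conditions (i) (on $K_S+H$) and (ii) (on $K_S+2H$) jointly control all six Ulrich vanishings.
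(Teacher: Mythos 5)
Your proposal is correct, and the two directions compare differently with the paper. The necessity direction ($\mc{E}$ Ulrich $\Rightarrow$ triple) is essentially the paper's argument: the paper also takes a general $r$-dimensional subspace $V\subseteq H^0(\mc{E})$, invokes a Bertini-type theorem for degeneracy loci of the (globally generated) bundle $\mc{H}om(V\otimes\mc{O}_S,\mc{E})$ to get a smooth curve $C=D_{r-1}(\varphi_V)$ with line-bundle cokernel (emptiness of $D_{r-1}$ is excluded because it would force $\mc{E}\cong\mc{O}_S^{\oplus r}$ and hence $d=1$), and uses $H^0(\mc{E}^*)=0$ to identify $W\cong V^*$; your semistability-plus-positive-slope argument for $H^0(\mc{E}^\vee)=0$ replaces the paper's citation but is the same in substance. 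The sufficiency direction is where you genuinely diverge: the paper does \emph{not} verify all six Ulrich vanishings. It only checks $H^0(\mc{E}(-1))=0$, $H^2(\mc{E}(-2))=0$ and the two Chern-class identities coming from (iii), and then invokes the characterization of Ulrich bundles on surfaces from \cite[Prop.~2.1]{casnati2019ulrich} as a black box. You instead supply the missing four vanishings directly via two propagation steps: multiplication by a general $s\in H^0(\mc{O}_S(H))$ (not vanishing on any component of $C$) gives the surjection $H^1(\mc{L}(K_S+H))\twoheadrightarrow H^1(\mc{L}(K_S+2H))$ and the injection $\ker\varphi_1\hookrightarrow\ker\varphi_2$, after which $\chi(\mc{E}(-1))=\chi(\mc{E}(-2))=0$ kills the two cokernels. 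Both propagation steps check out (the second because $\cdot s:H^0(K_S+H)\to H^0(K_S+2H)$ is injective on the integral surface $S$), so your route is self-contained and in effect reproves the relevant case of the cited proposition for Lazarsfeld--Mukai bundles; what it costs is the extra diagram-chasing, what it buys is independence from \cite{casnati2019ulrich} and an explanation, which the paper leaves implicit, of why conditions (i)--(iii) at the two twists $K_S+H$ and $K_S+2H$ suffice to control all six vanishings.
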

	

	\vspace{1ex}
	
	Once the theorem is settled, one may consider the curve’s geometry. The next corollary gives a precise answer concerning the connectedness of the curve $C$ of Theorem \ref{main}.

	\begin{cor}\label{connessione}
		Let \( S \subset \mathbb{P}^N \) be a smooth projective surface of degree $d\geq 2$, embedded by the linear system $|H|$, where $H\in |\mc{O}_S(1)|$. Let $\mc{E}$ be the Ulrich bundle on $S$ corresponding to the triple $(C, W, \mc{L})$, as in Theorem \ref{main}. Then $C$ is irreducible if and only if one of the following cases arises: 
		\begin{enumerate}[label=(\roman*)]
			\item $(S, \mc{O}_S(1), \mc{E})\ncong (\mathbb{P}\mc{F}, \mc{O}_{\mathbb{P}\mc{F}}(1), \pi^*(\mc{G}(\det (\mc{F})))$ where \( \mc{F} \) is a rank 2 very ample vector bundle over a smooth curve \( B \) of genus g, \( \mc{G} \) is a rank \( r \) vector bundle on \( B \) such that \( H^q(\mc{G}) = 0 \) for \( q \geq 0 \); 
			\item $(S, \mc{O}_S(1), \mc{E})\cong (\mathbb{P}^1\times \mathbb{P}^1, \mc{O}_{\mathbb{P}^1\times \mathbb{P}^1}(1), \mc{O}_{\mathbb{P}^1\times \mathbb{P}^1}(1,0))$ or 
			\newline $(S, \mc{O}_S(1), \mc{E})\cong (\mathbb{P}^1\times \mathbb{P}^1, \mc{O}_{\mathbb{P}^1\times \mathbb{P}^1}(1), \mc{O}_{\mathbb{P}^1\times \mathbb{P}^1}(0,1))$.
		\end{enumerate}
	\end{cor}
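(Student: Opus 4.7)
The strategy is to analyze the curve $C$ via its concrete realization from the proof of Theorem \ref{main}. That construction exhibits $C$ as the degeneracy locus of $r-1$ generic sections of the (automatically globally generated) Ulrich bundle $\mc{E}$, fitting into an exact sequence
\[
0 \to \mc{O}_S^{\oplus(r-1)} \to \mc{E} \to \mc{I}_C \otimes \det(\mc{E}) \to 0.
\]
In particular $C \in |\det \mc{E}|$, and the pair $(W,\mc{L})$ is recovered from this sequence by restriction to $C$. Connectedness of $C$ is therefore controlled by the positivity of $\mc{E}$ together with the geometry of $S$.

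Case (ii) is an immediate verification: for $(S,H) = (\mathbb{P}^1\times\mathbb{P}^1, \mc{O}(1,1))$ and $\mc{E} = \mc{O}(1,0)$ or $\mc{O}(0,1)$, one has $r=1$, and Theorem \ref{main}(iii) gives $\deg C = \tfrac{1}{2}(K_S + 3H)\cdot H = 1$, so $C$ is a line in the quadric $S\subset \mathbb{P}^3$, hence irreducible.

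I would prove the implication ``$C$ irreducible $\Rightarrow$ (i) or (ii)'' by contrapositive. Assume $(S,\mc{O}_S(1),\mc{E}) \cong (\mathbb{P}\mc{F},\mc{O}_{\mathbb{P}\mc{F}}(1),\pi^*(\mc{G}(\det\mc{F})))$ is of the scroll form excluded in (i). Since $\mc{E}$ is pulled back along $\pi:\mathbb{P}(\mc{F})\to B$, the generic map defining $C$ is itself the pullback of a map $\bar\phi:\mc{O}_B^{\oplus(r-1)}\to \mc{G}(\det\mc{F})$; its degeneracy locus is a zero-dimensional subscheme $Z\subset B$, reduced for generic $\bar\phi$, and $C = \pi^{-1}(Z)$ is a disjoint union of $\deg Z$ fibers of $\pi$, each isomorphic to $\mathbb{P}^1$. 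The hypothesis $H^q(\mc{G})=0$ for all $q\geq 0$ forces $\chi(\mc{G})=0$ on the curve $B$ of genus $g$, so $\deg\mc{G}= r(g-1)$ and
\[
\deg Z = c_1(\mc{G}(\det\mc{F})) = \deg\mc{G} + r\deg(\det\mc{F}) = r\bigl[(g-1)+\deg(\det\mc{F})\bigr].
\]
Irreducibility of $C$ requires $\deg Z=1$, which forces $r=1$ and $g+\deg(\det\mc{F})=2$; combined with the very ampleness of $\mc{F}$, an elementary case analysis singles out $B=\mathbb{P}^1$, $\mc{F}=\mc{O}_{\mathbb{P}^1}(1)^{\oplus 2}$, $\mc{G}=\mc{O}_{\mathbb{P}^1}(-1)$, which recovers precisely the two Ulrich line bundles of (ii). In every other scroll situation $\deg Z\geq 2$, so $C$ splits into at least two disjoint components.

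For the reverse implication in case (i), $\mc{E}$ is $0$-regular by the Ulrich vanishings and Castelnuovo-Mumford, hence globally generated; moreover, outside the scroll case, $\det\mc{E}$ is ample. I would then invoke a Fulton-Lazarsfeld-type connectedness theorem for degeneracy loci of a generic map $\mc{O}^{r-1}\to \mc{E}$ to conclude that $C$ is connected, and combine this with a Bertini-type argument to secure smoothness, whence irreducibility. The principal obstacle I anticipate is establishing the requisite positivity of $\mc{E}$ (or of $\det\mc{E}$) precisely outside the scroll form of (i); for this I would rely on the Chern-class characterization of Ulrich bundles on surfaces from \cite{casnati2019ulrich}, together with the classification of polarized surfaces carrying Ulrich pullback bundles, to ensure that the scrolls of (i) are the sole source of failure of connectedness.
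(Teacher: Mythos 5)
Your forward implication (scroll form $\Rightarrow$ $C$ is a disjoint union of $r(g(B)-1+\deg\det\mc{F})$ fibres, with irreducibility forcing the spinor case on $\mathbb{P}^1\times\mathbb{P}^1$) is essentially the paper's own route (Corollary \ref{prop c^2=0} together with Remark \ref{caso speciale}). Note, however, a slip in your setup: the curve $C$ of Theorem \ref{main} arises as the top degeneracy locus of a map $\mc{O}_S^{\oplus r}\to\mc{E}$ given by $r$ general sections, whose cokernel is a line bundle supported on $C$; with only $r-1$ general sections the dependency locus has codimension $2$, and the quotient is $\mc{I}_Z\otimes\det\mc{E}$ for a \emph{finite} scheme $Z$, not $\mc{I}_C\otimes\det\mc{E}$. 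The fact you actually use, namely $C\in|\det\mc{E}|$, survives, but the exact sequence you wrote does not produce $C$.

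The genuine gap is in the reverse implication, i.e.\ that outside the scroll case $C$ is connected. You propose a Fulton--Lazarsfeld connectedness theorem for degeneracy loci, and you yourself flag that the required positivity of $\mc{E}$ (or of $\det\mc{E}$) is the unresolved point. That is indeed the crux: Ulrich bundles are not ample in general, and outside the scroll case $\det\mc{E}$ is only nef (from global generation) and big (since $c_1(\mc{E})^2>0$ by Remark \ref{LM}); it need not be ample, so the standard Fulton--Lazarsfeld hypotheses are not satisfied, and ``relying on the classification to ensure that the scrolls are the sole source of failure'' is circular---that is exactly what has to be proved. The paper closes this gap with a short cohomological argument that avoids connectedness theorems altogether: if $C$ (smooth, so reducible is equivalent to disconnected) were disconnected, then $h^0(\mc{O}_C)\ge 2 > h^0(\mc{O}_S)$ forces $H^1(\mc{O}_S(-C))\neq 0$, i.e.\ $H^1(\mc{O}_S(K_S+\det\mc{E}))\neq 0$ by Serre duality; Kawamata--Viehweg vanishing for the nef divisor $\det\mc{E}$ then forces $c_1(\mc{E})^2=0$, which by Remark \ref{LM} puts you back in the scroll case. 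You should replace the Fulton--Lazarsfeld step by this argument (or else prove a nef-and-big version of the connectedness statement, which your proposal does not supply).
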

	
	\vspace{1ex}
	
	Once the correspondence is established, one can further investigate the geometry of the curve $C$.
	For example, when we work inside $\mathbb{P}^{3}$, the connectedness criterion collapses to a single exceptional configuration: except for the ruled surface $\mathbb{P}^{1}\times\mathbb{P}^{1}$ carrying spinorial Ulrich bundles, the associated curve is automatically connected (Corollary \ref{connessione P^3}).
	
	Moreover, the correspondence provides quantitative information. In particular, one obtains \textit{bounds for the genus} of the curve $C$: 
	an upper bound via Hodge index theorem, and a lower bound via Bogomolov’s inequality (Remark \ref{bounds}). 
	These estimates become especially sharp for \textit{quartic surfaces in $\mathbb{P}^3$}, where the lower bound can be refined, providing a nontrivial improvement over the classical inequalities (\hyperlink{refining}{Section 7}).
	
	Finally, the correspondence has applications to the study of \textit{Noether–Lefschetz loci}: it is proved that the subset of surfaces supporting an Ulrich line bundle coincides with an entire irreducible component of the Noether-Lefschetz locus (Theorem \ref{componente NL}). 
	%
	\vspace{1ex}
	\paragraph{\textbf{Acknowledgements}} The author is deeply grateful to Prof. Angelo Felice Lopez for his guidance and helpful advice. This work derives from the author’s Master’s thesis, completed in July 2025 at Roma Tre University under his supervision.
	\section{Preliminaries and Notation}
	This section introduces the fundamental concepts of Ulrich and Lazarsfeld-Mukai bundles, presenting their definitions and essential properties, and establishing the notation that will be used throughout the paper.
	\begin{definition}[\textbf{Ulrich bundle}] \label{def Ulrich}
		Let $X\subseteq \mathbb{P}^N$ be a smooth projective variety and let $\mc{E}$ be a vector bundle on $X$. $\mc{E}$ is said to be \emph{Ulrich} if it satisfies
		$H^i(X, \mc{E}(-p))=0$ for all $i\geq 0$ and $1\leq p \leq \dim X$.
	\end{definition}
	
	\begin{definition}[\textbf{Ulrich complexity}]
		Let \(X\subseteq \mathbb P^{N}\) be a smooth variety.
		The \emph{Ulrich complexity} of \(X\) is the integer
		\[uc(X):=
		\min\{r\ge 1 \  : \  \text{\(X\) carries an Ulrich vector bundle of rank \(r\)}\}.
		\]
		If no Ulrich vector bundle exists on \(X\), we set
		\(uc(X):=\infty\).
	\end{definition}
	
	\begin{oss} \label{proprietà Ulrich}
		Let $X\subseteq \mathbb{P}^N$ be a smooth projective variety of dimension $n$ and degree $d$ and let $\mc{E}$ be a rank $r$ Ulrich vector bundle on $X$. Then $\mc{E}$ is 0-regular, hence it is globally generated by \cite[Thm. 1.8.5]{lazarsfeld2017positivity}. Moreover, $h^0(\mc{E})=rd$ and $\chi(\mc{E}(m))=rd\binom{m+n}{n}$ (see \cite[(3.1)]{beauville2018introduction} and \cite[Lemma 2.6]{casanellas2012stable} ).
	\end{oss}
	
	\begin{oss}[\textbf{Stability of Ulrich bundles}] \label{stability}
		According to \cite[Thm. 2.9]{casanellas2012stable}, Ulrich bundles on smooth projective varieties are \textit{semi-stable}. Minimal-rank Ulrich bundles -- those whose rank coincides with the Ulrich complexity of the variety -- are even \textit{stable}, by \cite[Thm. 2.9]{casanellas2012stable} via an argument analougous to that in \cite[Proof of Lemma 2.3]{casnati2018stability}. Consequently, by \cite[Cor. 1.2.8]{huybrechts2010geometry}, a minimal-rank Ulrich bundle $\mc{E}$ on a smooth projective variety $X$ is also \textit{simple}, i.e., it satisfies
		\(
		h^0(X, \mc{E} \otimes \mc{E}^*) = 1.
		\)
		
	\end{oss}
	
	\begin{definition}[\textbf{Lazarsfeld-Mukai bundles}]
		Let \( S \subseteq \mathbb{P}^N \) be a smooth projective surface. Given a smooth curve $C \subset S$, a line bundle $\mc{L}$ on $C$, and a base-point-free subspace $W \subset H^0(\mc{L})$ of dimension $r$, the \emph{kernel bundle} $K_{C,W,\mc{L}}$ is defined as the kernel of the evaluation map
		\(
		W \otimes \mathcal{O}_S \twoheadrightarrow \mc{L}.
		\)
		
		Its dual $K_{C,W,\mc{L}}^*$ is called the \emph{Lazarsfeld--Mukai bundle} associated with $(C,W,\mc{L})$.
		
	\end{definition}

	\begin{oss}\label{LazMu}
		The bundle $K_{C,W,\mc{L}}$ is a vector bundle on $S$ of rank $r$, with first Chern class $c_1(K_{C,W,\mc{L}}) = -[C]$ and second Chern class $c_2(K_{C,W,\mc{L}}) = \deg \mc{L}$ (see \cite[\S 3.3]{lazarsfeld1989sampling}).
		
	\end{oss}
	The following lemma already highlights the central role of Lazarsfeld--Mukai bundles in the correspondence of Theorem \ref{main}. In particular, the reader should observe that, together with \cite[Prop. 2.1]{casnati2019ulrich}, it immediately follows that the Lazarsfeld--Mukai bundle associated with a triple $(C,W,\mathcal{L})$ satisfying properties (i), (ii), and (iii) of Theorem \ref{main} is indeed an Ulrich bundle.
	
	\begin{lem}\label{main lemma}
		Let \( S \subseteq \mathbb{P}^N \) be a smooth projective surface of degree $d$, embedded by the linear system $|H|$, where $H\in|\mc{O}_S(1)|$, and let $C \subset S $ be a smooth curve. Let $\mc{L}$ be a line bundle on $C$, $W\subseteq H^0(\mc{L})$ a base-point-free linear series of dimension $r$ and let \( \mathcal{E} \) be the Lazarsfeld–Mukai bundle on $S$ associated to $(C,W, \mc{L})$, i.e., $\mc{E}^*=K_{C,W,\mc{L}}.$ 
		Then
		\begin{enumerate}[label=(\roman*)]
			\item $h^0(\mc{E}(-1)) = h^1(\mc{L}(K_S+H))$
			\item $H^2(\mc{E}(-2))= \ker \varphi$, where $\varphi$ is the multiplication map,
			\[ \varphi: W\otimes H^0(S,\mc{O}_{S}(K_S+2H)) \to H^0(\mc{L}(K_S+2H)).\]
			\item  $\begin{cases}
				\chi(\mc{E}(-1))=0 \\[1ex]
				\chi(\mc{E}(-2))=0
			\end{cases}
			\iff \begin{cases}
				\deg (C)=\frac{r}{2}(K_S+3H)\cdot H \\[1ex]
				\deg(\mc{L})= r\chi(\mc{O}_{S})+g-1-C\cdot K_S-rH^2
			\end{cases}$
			
			$\\[0.5ex]$ where $g$ is the genus of the curve $C$. 
		\end{enumerate}
	\end{lem}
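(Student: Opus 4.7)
The plan is to exploit the defining short exact sequence of the kernel bundle
\[
0 \to \mc{E}^* \to W \otimes \mc{O}_S \to \mc{L} \to 0,
\]
twist it by $\mc{O}_S(K_S+pH)$ for $p=1,2$, pass to the long exact sequence in cohomology, and then use Serre duality to re-express the cohomology of $\mc{E}(-p)$.

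For part (i), Serre duality gives $h^0(\mc{E}(-1)) = h^2(\mc{E}^*(K_S+H))$. Twisting the sequence above by $\mc{O}_S(K_S+H)$, the relevant portion of the long exact sequence reads
\[
W \otimes H^1(\mc{O}_S(K_S+H)) \to H^1(\mc{L}(K_S+H)) \to H^2(\mc{E}^*(K_S+H)) \to W \otimes H^2(\mc{O}_S(K_S+H)).
\]
Both outer terms vanish by Kodaira vanishing, since $H$ is ample, yielding the desired isomorphism $H^1(\mc{L}(K_S+H))\cong H^2(\mc{E}^*(K_S+H))$. For part (ii), again by Serre duality $h^2(\mc{E}(-2)) = h^0(\mc{E}^*(K_S+2H))$, and twisting the defining sequence by $\mc{O}_S(K_S+2H)$ and taking the left end of the long exact sequence gives
\[
0 \to H^0(\mc{E}^*(K_S+2H)) \to W \otimes H^0(\mc{O}_S(K_S+2H)) \xrightarrow{\varphi} H^0(\mc{L}(K_S+2H)),
\]
so $H^0(\mc{E}^*(K_S+2H)) = \ker\varphi$, and hence (in the sense of dimensions, via Serre duality) one can identify $H^2(\mc{E}(-2))$ with $\ker\varphi$.

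Part (iii) is a direct Hirzebruch--Riemann--Roch computation. Starting from $c_1(\mc{E}^*) = -C$ and $c_2(\mc{E}^*) = \deg\mc{L}$ recorded in Remark \ref{LazMu}, dualizing gives $c_1(\mc{E}) = C$ and $c_2(\mc{E}) = \deg\mc{L}$; then the standard twisting rule yields
\[
c_1(\mc{E}(-p)) = C - rpH, \qquad c_2(\mc{E}(-p)) = \deg\mc{L} - (r-1)p\, C\cdot H + \tbinom{r}{2}p^2 H^2.
\]
Plugging into HRR on a surface and applying adjunction $C^2 + C\cdot K_S = 2g-2$ to rewrite the $C^2$ term, one obtains closed expressions for $\chi(\mc{E}(-1))$ and $\chi(\mc{E}(-2))$. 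Their difference simplifies to
\[
\chi(\mc{E}(-1)) - \chi(\mc{E}(-2)) = \deg C - \tfrac{r}{2}(K_S+3H)\cdot H,
\]
so the first degree condition is equivalent to the two Euler characteristics being equal; substituting this back into $\chi(\mc{E}(-2))=0$ gives precisely the asserted formula for $\deg\mc{L}$. The reverse implication is a direct substitution.

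I do not expect serious obstacles: part (i) hinges on the standard Kodaira vanishings, part (ii) is immediate from left-exactness, and part (iii) is mechanical bookkeeping. The only point requiring care is the Chern-class conversion from $\mc{E}^*$ to $\mc{E}$ (using $c_i(\mc{E}) = (-1)^i c_i(\mc{E}^*)$) before feeding the twisted classes into HRR, and the algebraic manipulation linking the two Euler characteristics to the two numerical conditions.
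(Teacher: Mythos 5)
Your proposal is correct and follows essentially the same route as the paper: twist the defining sequence of the kernel bundle by $\mc{O}_S(K_S+pH)$, use Kodaira vanishing and left-exactness to identify the relevant cohomology groups, apply Serre duality, and handle (iii) by Riemann--Roch with the Chern classes of the Lazarsfeld--Mukai bundle. The only cosmetic difference is that the paper first records the equivalence in terms of $c_1(\mc{E})\cdot H$ and $c_2(\mc{E})$ (matching the Casnati--Kim criterion) and then substitutes $c_1(\mc{E})=C$, $c_2(\mc{E})=\deg\mc{L}$, whereas you substitute from the start; your difference-of-Euler-characteristics bookkeeping checks out.
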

	\begin{proof}
		\textit{(i)} Consider the exact sequence \begin{equation} \label{se1}
			0\to \mc{E}^* \to W\otimes \mc{O}_S\to \mc{L}\to 0. 
		\end{equation}
		By tensoring (\ref{se1}) with $\mc{O}_S(K_S+H)$ and by taking cohomology, Kodaira's vanishing implies that $H^1(\mc{O}_S(K_S+H))=H^2(\mc{O}_S(K_S+H))=0$, and hence $H^1(\mc{L}(K_S+H))\cong H^2(\mc{E}^*(K_S+H))$. The statement then follows from Serre's duality.
		
		\textit{(ii)} Tensoring the exact sequence (\ref{se1}) by $\mc{O}_S(K_S+2H)$ and passing to cohomology identifies $\ker\varphi$ with $H^0(\mc{E}^*(K_S+2H))$; whence (ii) follows by Serre's duality.
		
		\textit{(iii)} A straightforward Riemann-Roch computation together with the standard equalities for the first and second Chern classes of $\mc{E}(-1)$ and $\mc{E}(-2)$ (see \cite[Prop. 5.17]{eisenbud20163264}) yields the equivalence  
		\begin{equation} \label{eq casnati}
			\begin{cases}
				\chi(\mc{E}(-1))=0 \\[1ex]
				\chi(\mc{E}(-2))=0
			\end{cases}
			\iff
			\begin{cases}
				c_1(\mc{E})\cdot H=\frac{r}{2}(K_S+3H)\cdot H \\[1ex]
				c_2(\mc{E})=r\chi(\mc{O}_S)+\frac{1}{2}c_1(S)c_1(\mc{E})+\frac{1}{2}c_1(\mc{E})^2-rH^2.
			\end{cases}
		\end{equation}
		
		Statement (iii) then follows immediately from the equivalence (\ref{eq casnati}) together with the well-known identities for the Chern classes of the Lazarsfeld-Mukai bundle associated with the triple $(C, W, \mc{L})$ established in Remark \ref{LazMu}. 
	\end{proof}
	
	\section{Proof of Theorem \ref{main}}
	
	\hypertarget{dimmain}{}
	\begin{proof}[Proof of Theorem \ref{main}] 
		Let us assume that $\mc{E}$ is a rank $r$ Ulrich vector bundle on $S$. Observe that $\mc{E}$ is globally generated, therefore, there exists a general subspace $V$ of $H^0(\mc{E})$ of dimension $r$. Hence, the evaluation map,  which is injective, 
		\begin{center}
			$\varphi_{V}: V\otimes \mc{O_{S}}\to \mc{E}$
		\end{center}
		
		is general in $\mc{H}om(V\otimes \mc{O}_S, \mc{E})$. 
		
		In addition, observe that $\mc{H}om(V\otimes \mc{O}_S, \mc{E})\cong \mc{O}_{S}^{\oplus r} \otimes \mc{E}$ is globally generated, since it is the tensor product of two globally generated vector bundles. We now consider the $(r-1)$-st degeneracy locus of $\varphi_{V}$, $D_{r-1}(\varphi_{V})$. According to \cite[Thm. 2.8]{ottaviani1995varieta}, this locus is either empty or has codimension 1.

		We now claim that if $D_{r-1}(\varphi_{V})=\emptyset $ then $S$ must be a surface of degree $d=1$, which is excluded by the assumptions. In fact, if $D_{r-1}(\varphi_{V})=\emptyset$, then $rk(\varphi(x))=r$ $\forall x\in S$, so that $\varphi_{V}$ is surjective at every point of $S$. It follows that $\mc{E}\cong \mc{O}_{S}^{\oplus r}$; in particular, since $\mc{O}_{S}^{\oplus r}$ is Ulrich, so is $O_S$  hence, by Remark \ref{proprietà Ulrich}, one obtains $d=h^0(\mc{O}_S)=1$. 
		
		Hence, the case $D_{r-1}(\varphi_V)=\emptyset$ cannot occur, and, therefore, $D_{r-1}(\varphi_V)$ has the expected codimension one, so that it defines a curve on $S$. Let 
		\[C:= D_{r-1}(\varphi_{V}).\]
		By \cite[Thm. 2.8]{ottaviani1995varieta}, Sing($C$)=$D_{r-2}(\varphi_{V})$. Moreover, according to the same theorem, if $D_{r-2}(\varphi_{V})$ were non empty, it would have codimension $4$ which is clearly a contraddiction. Hence, Sing($C$)$=\emptyset$, implying that $C$ is a smooth curve on $S$.
		
		\vspace*{1ex}
		We now proceed to define an appropriate line budle $\mc{L}$ on $C$, as well as a suitable $r$-dimensional base-point free subspace $W\subseteq H^0(\mc{L)}$, which we will later prove to satisfy properties (i), (ii), and (iii).
		
		Consider the exact sequence
		\begin{equation}\label{1}
			0\to \mc{O}_S^{\oplus r}\xrightarrow{\varphi_V} \mc{E}\to \mc{L'}:=\text{Coker}(\varphi_V)\to 0. 
		\end{equation}
		By construction, the cokernel $\mc{L'}$ is supported on $C = D_{r-1}(\varphi_V)$. As $D_{r-2}(\varphi_V) = \emptyset$, the restriction of $\mc{L'}$ to $C$ has constant rank one, and therefore $\mc{L'}$ is a line bundle on $C$.
		
		Dualizing sequence (\ref{1}), and noting that $(\mc{L'})^* = 0$ since $\mc{L'}$ is torsion on $S$, we obtain
		\[0\to \mc{E}^* \to V^*\otimes \mc{O}_S\to \mc{L}\to 0,\]
		where $\mc{L} := \mc{E}xt^1(\mc{L'}, \mc{O}_S)$. Finally, one has $\mc{L}\cong \mc{N}_{C/S}\otimes \mc{L'}^*$ (see  \cite[Proof of Prop. 1.1]{green1987special}) so that $\mc{L}$ is a line bundle on $C$.
		
		The above exact sequence induces the long exact sequence in cohomology
		\[0\to H^0(\mc{E^*})\to V^*\otimes H^0( \mc{O}_S)\xrightarrow{\psi} H^0(\mc{L})\to H^1(\mc{E}^*)\to \ldots\]
		By \cite[Lemma 2.1]{ciliberto2023ulrich}, $H^0(\mc{E}^*) = 0$, so the map $\psi: V^* \to H^0(\mc{L})$ is injective. Identifying $V^* \otimes H^0(\mc{O}_S) \cong V^*$, we set
		\[
		W := \mathrm{Im}(\psi) \subset H^0(\mc{L}),
		\]
		which gives $W \cong V^*$ and $\dim(W) = r$. For each point $p \in C$, the fiber map $\psi_p: V^* \to \mc{L}_p$ is nonzero, hence $W$ is base-point free on $C$.
		
		The triple $(C, W, \mc{L})$ thus satisfies the hypotheses of Lemma \ref{main lemma}: $C \subset S$ is smooth, $\mc{L}$ is a line bundle on $C$, $W \subset H^0(\mc{L})$ is an $r$-dimensional base-point-free linear series, and $\mc{E}^* \cong K_{C,W,\mc{L}}$. Since $\mc{E}$ is Ulrich, we have
		\[
		H^0(\mc{E}(-1)) = 0, \quad H^2(\mc{E}(-2)) = 0, \quad \text{and} \quad \chi(\mc{E}(-1)) = \chi(\mc{E}(-2)) = 0.
		\]
		A direct application of Lemma \ref{main lemma} then shows that the triple $(C, W, \mc{L})$ satisfies conditions (i), (ii), and (iii), completing the first part of the proof. 
		
		Conversely, suppose one is given a smooth curve $C\subset S$, a line bundle $\mc{L}$ on $C$, and an 
		$r-$dimensional, base‑point‑free subspace $W\subseteq H^0(\mc{L})$ satisfying conditions (i), (ii) and (iii) and set \( \mc{E}:=K_{C,W,\mc{L}}^*.\)
		
		Observe that $\mc{E}$ is a vector bundle of rank $r$ on $S$ (see Renark \ref{LazMu}). 
		
		By Lemma \ref{main lemma}(i) and (ii), conditions (i) and (ii) translate into 
		\[ H^0(\mc{E}(-1))=0 \qquad \text{and} \qquad H^2(\mc{E}(-2))=0. \] 
		Moreover, Lemma \ref{main lemma}(iii), together with equivalence (\ref{eq casnati}), shows that the numerical conditions in (iii) are equivalent to the Chern–class identities 
		\[
		c_1(\mc{E})\cdot H=\tfrac{r}{2}(K_S+3H)\cdot H, 
		\qquad 
		c_2(\mc{E})=r\chi(\mc{O}_S)+\tfrac{1}{2}c_1(S)c_1(\mc{E})+\tfrac{1}{2}c_1(\mc{E})^2-rH^2.
		\]
		Hence, $\mc{E}$ is an Ulrich bundle of rank $r$ on $S$ by \cite[Prop. 2.1]{casnati2019ulrich}.
	\end{proof}
	We record some supplementary points regarding the multiplication map from point (ii) of Theorem \ref{main}. 
	\begin{oss}
		Within the framework of Theorem \ref{main}, let $\mc{E}$ be be an Ulrich bundle on $S$ with associated triple $(C,W, \mc{L})$, or conversely let $(C,W,\mc{L})$ be any triple satisfying (i), (ii) and (iii) with associated Ulrich bundle $\mc{E}$. Then the multiplication map
		\[\varphi: W\otimes H^0(S,\mc{O}_{S}(K_S+2H)) \to H^0(\mc{L}(K_S+2H))\] 
		is not merely injective but in fact an isomorphism.
	\end{oss}
	\begin{proof}
		Observe that it suffices to show that \(
		rh^0(\mathcal{O}_S(K_S + 2H)) = h^0(\mathcal{L}(K_S + 2H)),
		\) 	where we set $r := \dim W$. In both cases, we have $\mathcal{E}^* \cong K_{C,W,\mathcal{L}}$. Hence, by Serre's duality and the fact that $\mathcal{E}$ is Ulrich, we obtain
		\(
		\chi(\mathcal{E}^*(K_S+2H)) = \chi(\mathcal{E}(-2H)) = 0.
		\)
		Consider the exact sequence
		\[
		0 \to \mathcal{E}^* \to W \otimes \mathcal{O}_S \to \mathcal{L} \to 0
		\]
		and tensor it with $\mathcal{O}_S(K_S + 2H)$. By additivity of the Euler characteristic and Kodaira's vanishing, we obtain
		\(
		rh^0(\mathcal{O}_S(K_S + 2H)) = \chi(\mathcal{L}(K_S + 2H)). 
		\)
		From the short exact sequence
		\[0\to \mc{L}(K_S+H)\to \mc{L}(K_S+2H)\to \mc{L}(K_S+2H)_{|H_C}\to 0\]
		together with $H^1(\mathcal{L}(K_S+H)) = 0$ (by condition (i) of Theorem \ref{main}) and the vanishing of $H^1(\mathcal{L}(K_S+2H)|_{H_C})$ (since $\mc{L}(K_S+2H)_{|H_C}$ is supported on finitely many points), we deduce $H^1(\mathcal{L}(K_S+2H)) = 0$. Hence,
		\(
		\chi(\mathcal{L}(K_S+2H)) = h^0(\mathcal{L}(K_S+2H))
		\), thereby implying \(
		rh^0(\mathcal{O}_S(K_S + 2H)) = h^0(\mathcal{L}(K_S + 2H))
		\) and thus concluding the proof.
		\end{proof}
		
		\begin{oss}
			Recall that the multiplication map $\varphi$ appearing in point (ii) of Theorem \ref{main} is defined by restriction to $C$; in other words, $\varphi=\psi \circ \rho$ where \[\psi: W\otimes H^0(\mc{O}_S(K_S+2H)) \to W\otimes H^0(\mc{O}_S(K_S+2H)_{|C})\] and $\rho$ is the natural map $W\otimes H^0(\mc{O}_S(K_S+2H)_{|C})\to H^0(\mc{L}(K_S+H))$.
			
			Under the hypotheses of Theorem \ref{main} (in particular $d\ge2$) and assuming $r\ge2$, the map $\psi$ is always injective.
		\end{oss}
		\begin{proof}
			Since 
			\(
			\ker\psi \cong W\otimes H^0\bigl(\mathcal{O}_S(K_S+2H - C)\bigr),
			\)
			it suffices to show 
			\(
			(K_S+2H - C)\cdot H <0,
			\)
			so that $\mathcal{O}_S(K_S+2H - C)$ has no nonzero global sections.
			
			Using $\deg(C)=C\cdot H = \tfrac r2(K_S+3H)\cdot H$ and the adjunction formula $K_S\cdot H + H^2 = 2g(H)-2$, we find
			\[(K_S+2H - C)\cdot H= -\tfrac12\Bigl((r-2)(K_S\cdot H + 2d) +r\,d\Bigr).\]
			Since $r\ge2$ and $d\ge2$, the right-hand side is negative, hence $\psi$ is injective.
			\end{proof}
	
	\section{Curves of Theorem \ref{main}: Connectivity and Genus}
	
	The next remark provides bounds on the genus of the curves in Theorem \ref{main}, with the Hodge inequality giving an upper bound and Bogomolov’s inequality a lower bound.
	
	\begin{oss}\label{bounds}
		Let \( S \subset \mathbb{P}^N \) be a smooth projective surface of degree $d\geq 2$, embedded by the linear system $|H|$, where $H\in |\mc{O}_S(1)|$. In the setting of Theorem \ref{main}, let $\mc{E}$ be be an Ulrich bundle on $S$ with associated triple $(C,W, \mc{L})$, or conversely let $(C,W,\mc{L})$ be any triple satisfying (i), (ii) and (iii) with associated Ulrich bundle $\mc{E}$. Let $g$ be the genus of $C$. Then 
		\begin{enumerate} [label=(\roman*)]
			\item $g\leq \frac{r^2}{8d}K_S\cdot H(K_S\cdot H+6d)+ \frac{1}{2}C\cdot K_S+ \frac{9r^2d}{8}+1$;
			\item $g\geq \frac{r+1}{2}C\cdot K_S+ r^2(d-\chi(\mc{O}_S))+1.$
		\end{enumerate}
	\end{oss}
	
	\begin{proof}
		(i) It follows directly from \cite[Thm. V.1.9]{hartshorne2013algebraic}, observing that $C^2=2g-2-C\cdot K_S$ by adjunction and $C\cdot H= \frac{r}{2}(K_{S}+3H)\cdot H $ by condition (iii) of Theorem \ref{main}.
		
		(ii) $\mc{E}$ is an Ulrich vector bundle, hence, it is semi-stable. Then, by \cite[Thm. 0.3]{gieseker1979theorem}, one has
		\[2rc_2(\mc{E}) - (r-1)c_1(\mc{E})^2 \geq 0,\]
		that is, observing that $c_1(\mc{E}) = C$ and that $c_2(\mc{E}) = \deg \mc{L}= r\chi(\mc{O}_S) + \frac{1}{2}C^2 - \frac{1}{2}C \cdot K_S - rd$ by Remark \ref{LazMu} and condition (iii) of Theorem \ref{main} together with the adjunction,
		\begin{equation}\label{bog}
			2r^2\chi(\mc{O}_S)+C^2-rC\cdot K_S-2r^2d\geq 0
		\end{equation}
		Then (ii) follows from (\ref{bog}) using the identity $C^2=2g-2-C\cdot K_S$.
	\end{proof}
	
	Our next goal is to investigate the connectedness of the curve. To this end, we begin with the following observation, which is a corollary of \cite[Thm. 1]{lopez2021classification}.
	\begin{oss}\label{LM}
		Let $S\subseteq \mathbb{P}^N$ be a smooth projective surface, and let $H$ be a very ample line bundle on $S$. Suppose that $\mc{E}$ is an Ulrich bundle with respect to $(S,H)$. Then the following conditions are equivalent:
		\begin{enumerate}[label=(\roman*)]
			\item $(S, \mc{O}_S(1), \mc{E})$ is isomorphic either to $(\mathbb{P}^2, \mathcal{O}_{\mathbb{P}^2}(1), \mathcal{O}_{\mathbb{P}^2}^{\oplus r})$ or to \newline $(\mathbb{P}\mc{F}, \mc{O}_{\mathbb{P}\mc{F}}(1), \pi^*(\mc{G}(\det (\mc{F})))$ where \( \mc{F} \) is a rank 2 very ample vector bundle over a smooth curve \( B \) and \( \mc{G} \) is a rank \( r \) vector bundle on \( B \) such that \( H^q(\mc{G}) = 0 \) for \( q \geq 0 \);
			\item $c_1(\mc{E})^2=0$;
			\item the bundle $\mc{E}$ is not big.
		\end{enumerate}
	\end{oss}
	\begin{proof}
		Assume (i) holds. If  $(S, \mc{O}_S(1), \mc{E})\cong (\mathbb{P}^2, \mathcal{O}_{\mathbb{P}^2}(1), \mathcal{O}_{\mathbb{P}^2}^{\oplus r}) $, then $c_1(\mc{E})^2=0$. Otherwise, set $\mc{M}:=\mc{G}\otimes \det(\mc{F})$, so that $\mc{E}=\pi ^*(\mc{M})$, and $\mc{L}:= \det (\mc{M})$. Since $\pi : S\to B$ is a ruled surface on a curve, we have
		\[c_1(\mc{E})^2= (\det(\mc{E}))^2= (\det (\pi^*\mc{M}))^2= (\pi^*(\det \mc{M}))^2= (\pi^* \mc{L})^2=\pi^*(\mc{L}^2)=0\]
		because the self-intersection number of a line bundle on a curve is always zero. Hence, condition (ii) follows.
		
		Now assume (ii) holds. According to \cite[Remark 2.2]{lopez2021classification}, if an Ulrich bundle $\mc{E}$ satisfies $c_1(\mc{E})^2=0$ then $\mc{E}$ cannot be big, thus condition (iii) follows. 
		
		Finally, if (iii) holds, then \cite[Thm. 1]{lopez2021classification} asserts that $\mc{E}$ must be of the specific form described in (i). 
		
		Therefore, the three statements are equivalent.
	\end{proof}
	
	We proceed by presenting the following corollary of Theorem \ref{main}. 
	\begin{cor} \label{prop c^2=0}
		Let \( S = \mathbb{P}\mathcal{F} \subset \mathbb{P}^N \) be a ruled surface of degree \( d\geq 2 \) over a smooth curve $B$, with $H\in |\mc{O}_{\mathbb{P}\mc{F}}(1)|$.  
		Then there exists a rank \( r \) Ulrich bundle \( \mathcal{E} \) on \( S \) with \( c_1(\mathcal{E})^2 = 0 \)  
		if and only if \( S \) contains a smooth curve \( C \), which is the disjoint union of  
		\[
		t = r\,(g(B) - 1 + d)
		\]  
		fibers (lines) of $S$, together with a base point free \( r \)-dimensional subspace \( W \subseteq H^0(\mathcal{O}_C) \) such that the multiplication map \(W\otimes H^0(\mc{O}_S(K_S+2H))\to H^0(\mc{O}_C)\)
		is injective. 
	\end{cor}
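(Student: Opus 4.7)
The plan is to deduce both directions of the corollary by combining Theorem \ref{main} with Remark \ref{LM}, using that $c_1(\mc{E})^2=0$ translates, via Remark \ref{LM}, into a structural statement on $\mc{E}$ and hence on the associated curve $C$ of Theorem \ref{main}.

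For the forward direction, I would start from an Ulrich bundle $\mc{E}$ of rank $r$ with $c_1(\mc{E})^2=0$ and invoke Remark \ref{LM}: since $d\ge 2$ excludes the $\mathbb{P}^2$ case, one has $\mc{E}\cong \pi^*\mc{M}$ for a bundle $\mc{M}$ on $B$ (namely $\mc{M}=\mc{G}\otimes \det\mc{F}$). Theorem \ref{main} then yields a triple $(C,W,\mc{L})$ with $[C]=c_1(\mc{E})=\pi^*c_1(\mc{M})$, which is the pullback of a divisor class from $B$ and is therefore linearly equivalent to a sum of fibers. Smoothness of $C$ forces it to be a disjoint union of $t$ fibers, where $t$ is read off from condition (iii) via $t=C\cdot H=\tfrac{r}{2}(K_S+3H)\cdot H$. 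A short intersection-number calculation on $\mathbb{P}\mc{F}$, using the standard formula $K_{\mathbb{P}\mc{F}}\sim -2\xi+\pi^*(K_B+\det\mc{F})$ together with $H\equiv \xi$ and $\xi^2=d$, yields $K_S\cdot H=2g(B)-2-d$, whence $t=r(g(B)-1+d)$.

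The remaining point in this direction is to show $\mc{L}\cong \mc{O}_C$. Substituting $\chi(\mc{O}_S)=1-g(B)$, $C\cdot K_S=-2t$ (by adjunction on each rational component), and $g(C)=1-t$ into condition (iii) of Theorem \ref{main} gives $\deg\mc{L}=0$. Since $W$ is base-point free, $\mc{L}$ is globally generated by $W\subseteq H^0(\mc{L})$ and therefore effective; an effective degree-zero line bundle on a $\mathbb{P}^1$-component is trivial, so $\mc{L}\cong \mc{O}_C$ and $W\subseteq H^0(\mc{O}_C)$ as required.

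For the converse, starting from $C$ and $W$ as in the statement, I would set $\mc{L}:=\mc{O}_C$ and verify the hypotheses of Theorem \ref{main}. Condition (iii) follows from the intersection identities established above; condition (ii) is the hypothesis; condition (i) holds because $(K_S+H)\cdot f=-1$ on each fiber $f\cong\mathbb{P}^1$, so $\mc{O}_C(K_S+H)$ restricts to $\mc{O}_{\mathbb{P}^1}(-1)$ on every component, with vanishing $H^1$. Theorem \ref{main} then produces an Ulrich bundle $\mc{E}:=K_{C,W,\mc{O}_C}^*$ of rank $r$, and $c_1(\mc{E})^2=C^2=0$ since $C$ is a disjoint union of fibers. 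The main obstacle I anticipate is less in any single step than in the bookkeeping required to match $\tfrac{r}{2}(K_S+3H)\cdot H$ with the stated value $r(g(B)-1+d)$ and to derive $\deg\mc{L}=0$; the real geometric content is the passage from $c_1(\mc{E})^2=0$ to $C$ being a disjoint union of fibers, which is handled by combining the pullback structure $\mc{E}=\pi^*\mc{M}$ from Remark \ref{LM} with smoothness of $C$.
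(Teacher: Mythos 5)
Your proposal is correct and follows essentially the same strategy as the paper: Remark \ref{LM} gives the pullback structure $\mc{E}\cong\pi^*\mc{M}$, so $C\in|\pi^*\det\mc{M}|$ is forced by smoothness to be a disjoint union of fibers, one then computes $t$ and shows $\mc{L}\cong\mc{O}_C$, and the converse is a direct verification of (i)--(iii) of Theorem \ref{main}. Two of your sub-steps differ harmlessly from the paper's: you get $t=C\cdot H=\tfrac r2(K_S+3H)\cdot H=r(g(B)-1+d)$ directly from the canonical bundle formula, where the paper instead computes $t=\deg c_1(\mc{G})+r\deg c_1(\mc{F})$ via Riemann--Roch on $\mc{G}$; and you deduce $a_i\ge 0$ on each component from base-point-freeness of $W$, where the paper uses condition (i) and Remark \ref{remark tecnico}(ii) --- both are valid, and yours are arguably more economical. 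The one point you should make explicit (in both directions) is the identification $\mc{O}_C(K_S+2H)\cong\mc{O}_C$, which follows from $(K_S+2H)\cdot F=0$ on each fiber; without it the injectivity hypothesis of the corollary (target $H^0(\mc{O}_C)$) does not literally match condition (ii) of Theorem \ref{main} (target $H^0(\mc{L}(K_S+2H))$). This is a one-line check, not a genuine gap.
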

	Before proving Corollary \ref{prop c^2=0}, we record a brief technical remark.
	\begin{oss} \label{remark tecnico}
		Let \( S \cong \mathbb{P}\mathcal{F} \subset \mathbb{P}^N \) be a ruled surface of degree \( d\geq 2 \) over a smooth curve $B$, with $H\in |\mc{O}_{\mathbb{P}\mc{F}}(1)|$. Let $C$ be disjoint union of $t$ fibers of $S$. Then 
		\begin{enumerate} [label=(\roman*)]
			\item $r\chi(\mc{O}_{S})+g(C)-1-C\cdot K_S-rH^2=t-r(g(B)-1+d)$
			\item Let $\mc{L}$ be a line bundle on $C$. Then $\mc{L}\cong \bigoplus_{i=1}^t \mc{O}_{\mathbb{P}^1}(a_i)$ for some integers $a_i$. 
			
			In particular, $H^1(\mc{L}(K_S+H))=0$ if and only if $a_i\geq 0$ for every $i=1,\ldots, t$
		\end{enumerate}
	\end{oss}
	\begin{proof}
		(i) By adjunction, one has
		\[ r\chi(\mathcal{O}_S) + g(C) - 1 - C \cdot K_S - rH^2
		= r\chi(\mathcal{O}_S) + \frac{1}{2}(C^2 - C \cdot K_S) - rd.\]
		Each fiber $F$ satisfies $F^2 = 0$ and $g(F) = 0$, hence $F \cdot K_S = -2$ and $C \cdot K_S = -2t$. Moreover, by \cite[V.2.4]{hartshorne2013algebraic} and Riemann-Roch on curves, $\chi(\mathcal{O}_S) = \chi(\mathcal{O}_B) = 1 - g(B)$. Observing that $C^2 = 0$, we obtain (i).

		(ii) The first part is immediate. For the second part, simply note that on each fiber $F$, $(K_S + H) \cdot F = -1$, so that
		\( \mathcal{L}(K_S + H)|_F \cong \mathcal{O}_{\mathbb{P}^1}(a_i - 1),\)
		and, hence,
		\(H^1(\mathcal{L}(K_S + H)) = 0\) if and only if \(a_i \ge 0 \text{ for all } i.\)
	\end{proof}
	
	\begin{proof}[Proof of Corollary \ref{prop c^2=0}] Suppose that $\mc{E}$ is an Ulrich bundle with $c_1(\mc{E})^2=0$ and let $(C,W,\mc{L})$ denote the triple arising from 
		Theorem \ref{main}. Then, by Remark \ref{LM}, $\mc{E}\cong \pi^*\mc{G}(\det(\mc{F}))$ where $\mc{G}$ is a rank $r$ vector bundle on $B$ with vanishing cohomology. In the \hyperlink{dimmain}{Proof of Theorem \ref{main}} we saw that the curve $C$ is a divisor in the 
		$\det (\mc{E})$. Set $\mc{G'}=\mc{G}(\det (\mc{F}))$ and, using the fact that pullback commutes with determinant, observe that
		\(\det (\mc{E})=\det (\pi^*\mc{G'})=\pi^*\det (\mc{G'})\). 
		But $\det (\mc{G'})$ is a divisor on the base curve $B$ and therefore corresponds to an effective divisor $D=P_1+ \ldots +P_t$ on $B$. Pulling back, it follows that 
		\(C\in |\pi^*D|=|\pi^*P_1+\ldots + \pi^*P_t|.\)
		
		Since each $\pi^*P_i$ is exactly the fiber $F_i\cong \mathbb{P}^1$ we conclude that $C$ is linearly equivalent to the sum of these fibers
		and, since $C$ is smooth by Theorem \ref{main}, no two fibers $F_i$ can coincide or meet nontrivially. Hence, 
		\(C = \bigsqcup_{i=1}^t F_i\)
		is the disjoint union of $t$ distinct copies of $\mathbb{P}^1$, which, specifically, are fibers of the ruling $\pi.$
		
		In particular, observe that 
		\[t=\deg (c_1 (\mc{G'}))=\deg(c_1(\mc{G}\otimes\det (\mc{F})))= \deg (c_1(\mc{G}))+r\deg (c_1 (\mc{F}))\]
		where in the last equality we used \cite[Prop. 5.18]{eisenbud20163264}.
		
		Since $H^i(\mc{G})=0$ for all $i$, Riemann–Roch on the curve $B$ gives
		\[0=\chi (\mc{G})= \deg (\mc{G})+r(1-g(B))\]
		hence $\deg (\mc{G})=r(g(B)-1)$, that is, $\deg (c_1(\mc{G}))=r(g(B)-1).$
		
		By \cite[Appendix A.3]{hartshorne2013algebraic}, one immediately obtains
		\( d = \pi^* c_1(\mathcal{F}) \cdot H.\)
		Since $\pi^* c_1(\mathcal{F})$ is a sum of $\deg(c_1(\mathcal{F}))$ fibers, each meeting $H$ once, it follows that $d = \deg(c_1(\mathcal{F}))$. Hence,
		\[ t=  \deg (c_1(\mc{G}))+r\deg (c_1 (\mc{F})) = r(g(B)-1+d)\]
		as required.
		
		Next, we show that $\mathcal{L}$ is trivial. Condition (iii) of Theorem \ref{main}, together with the preceding computation of $t$ and Remark \ref{remark tecnico}(i), immediately gives 
		\(\deg(\mathcal{L}) = 0.\)
		Since $C = \bigsqcup_{i=1}^t F_i$ and $H^1(\mathcal{L}(K_S+H)) = 0$ by condition (i) of Theorem \ref{main}, Remark \ref{remark tecnico}(ii) implies that
		\( \mathcal{L} \cong \bigoplus_{i=1}^t \mathcal{O}_{\mathbb{P}^1}(a_i) \)
		with $\sum_i a_i = \deg(\mathcal{L}) = 0$ and $a_i \ge 0$ for each $i$. It follows that $a_i = 0$ for all $i$, and hence $\mathcal{L} \cong \mathcal{O}_C$.
		Finally, condition (ii) of Theorem \ref{main} ensures that the multiplication map
		\(
		W \otimes H^0(\mathcal{O}_S(K_S + 2H)) \to H^0(\mathcal{O}_C(K_S + 2H))
		\)
		is injective. Since $(K_S + 2H) \cdot C = 0$, we have $H^0(\mathcal{O}_C(K_S + 2H)) = H^0(\mathcal{O}_C)$, which concludes the first part of the proof.

		Conversely, 
		by Remark \ref{remark tecnico}(i) and setting $\mc{L}:=\mc{O}_C$, one has 
		\[r\chi(\mc{O}_{S})+g(C)-1-C\cdot K_S-rH^2=0, \qquad \deg(\mc{L})=r\chi(\mc{O}_{S})+g(C)-1-C\cdot K_S-rH^2.\]
		Remark \ref{remark tecnico}(ii) then gives $H^1(\mc{L}(K_S+H))=0$. Recalling that $K_S=-2H+\pi^*(K_B+ \det \mc{F})$ and that $\deg (\det \mc{F})=d$ by the first part of the proof, we compute
		\begin{center}
			$\frac{r}{2}(K_S+3H)\cdot H= \frac{r}{2}(H^2+\deg(K_B+\det \mc{F}))
			=r(g(B)-1+d)=t=\deg(C).$
		\end{center}
		Therefore, the triple $(C,W,\mc{L})$ satisfies conditions (i), (ii) and (iii) of Theorem \ref{main}, giving rise to an Ulrich bundle $\mc{E}$ on $S$ such that $c_1(\mc{E})^2=C^2=0$ and thus concluding the proof.
	\end{proof}
	
	\begin{oss}\label{caso speciale}
		In the setting of Corollary \ref{prop c^2=0}, suppose $\deg(S) = d \geq 2$. A straightforward computation shows that
		\[t=1 \iff r=1,\ d=2,\  g(B)=0,\]
		which means that $S \cong \mathbb{P}^1 \times \mathbb{P}^1$. By , on this surface every Ulrich vector bundle splits as
		\[\mathcal{O}_{\mathbb{P}^1 \times \mathbb{P}^1}(1,0)^{\oplus \alpha} \oplus \mathcal{O}_{\mathbb{P}^1 \times \mathbb{P}^1}(0,1)^{\oplus \beta}, \quad \alpha, \beta \geq 0.\]
		Since in this case $r = 1$, the only possibilities for $\mathcal{E}$ are $\mc{O}_{\mathbb{P}^1\times \mathbb{P}^1}(1,0)$ or $\mc{O}_{\mathbb{P}^1\times \mathbb{P}^1}(0,1)$. 
		
		Moreover, this special case is covered by the Lopez–Mu\~noz theorem \cite[Thm. 1]{lopez2021classification}, by taking \(\mc{F}=O_{\mathbb{P}^1}(1)^{\oplus 2}, \ 
		H=\mc{O}_{\mathbb{P}\mc{F}}(1), \  \mc{G}=O_{\mathbb{P}^1}(-1).\)
		
	\end{oss}
	\vspace{1ex}
	Now, we can finally turn to the proof of Corollary \ref{connessione}.
	\hypertarget{dimconnessione}{}
	\begin{proof}[Proof of Corollary \ref{connessione}] 
		First suppose that $C$ is irreducible. If $C^2>0$, then by Corollary \ref{LM}, $(S, \mc{O}_S(1), \mc{E})$ cannot be of the form$ (\mathbb{P}\mc{F}, \mc{O}_{\mathbb{P}\mc{F}}(1), \pi^*(\mc{G}(\det (\mc{F})))$, so we are in case (i). On the other hand, if $C^2=0$, by Corollary \ref{LM}, the only possibility for $C$ to remain irreducible is the special situation described in Remark \ref{caso speciale}, that is, (ii). 
		
		Conversely, assuming that $C$ is reducible, we aim to show $c_1(\mathcal{E})^2 = 0$. Once this is established, Corollary \ref{LM} excludes case (i), and case (ii) is precluded as well, since it would imply that $C$ is a line.
		
		Consider the following short exact sequence 
		\[ 0\to \mc{O}_S(-C)\to \mc{O}_S\to \mc{O}_C\to 0\]
		and the induced long exact sequence in cohomology 
		\[0\to H^0(\mc{O}_S(-C))\to H^0( \mc{O}_S)\xrightarrow{\varphi} H^0(\mc{O}_C)\to H^1(\mc{O}_S(-C))\to \ldots  \]
		Since $C$ is smooth and reducible, we have $h^0(\mc{O}_C)\geq 2$, whereas $H^0(\mc{O}_S)\cong \mathbb{C}$. It follows that the map $\varphi$ fails to be surjective, and hence $H^1(\mc{O}_S(-C))\neq 0$. On the other hand, from the proof of Theorem \ref{main} we know that $\det(\mc{E})=C$, so that, by Serre's duality, we have
		\(h^1(\mc{O}_S(K_S+\det(\mc{E})))=h^1(\mc{O}_S(-C))\neq 0. \)
		Moreover, the bundle $\mc{E}$ is globally generated, whence $\det (\mc{E})$ is a globally generated line bundle on $S$; hence, $\det(\mc{E})$ is nef. An application of the Kawamata–Viehweg vanishing theorem \cite[Thm. 4.3.1]{lazarsfeld2017positivity} then yields $c_1(\mc{E})^2=0$, concluding the proof. 
	\end{proof}
	
	\section{Applications to surfaces in \(\mathbb{P}^3\)}
	In the particular case of surfaces in $\mathbb{P}^3$, Theorem \ref{main} together with the bounds on the genus of the curve $C$ simplify to the following form.
	\begin{thm} \label{main P3}
		Let \( S \subset \mathbb{P}^3 \) be a smooth projective surface of degree $d\geq 2$, embedded by the linear system $|H|$, where $H\in |\mc{O}_S(1)|$. Then, there exists an Ulrich bundle $\mc{E}$ of rank $r$ on $S$ if and only if there exists a smooth (possibly disconnected) curve $C \subset S $ of genus $g$ and a line bundle $\mc{L}$ on $C$, with $h^0(\mc{L})=r$, such that: 
		
		\begin{enumerate}[label=(\roman*)]
			\item $H^1(C, \mc{L}((d-3)H))=0$; 
			\item the multiplication map 
			\begin{center}
				$\varphi: H^0(\mc{L})\otimes H^0(S,\mc{O}_{S}((d-2)H)) \to H^0(\mc{L}((d-2)H))$
			\end{center}
			is injective;
			\item $ \deg(C)= \frac{r}{2}d(d-1)$ and \\[1ex] $\deg(\mc{L})= \frac{r}{2}(2\binom{d-1}{3}-d(d-2)(d-3)+2)+g-1$.
		\end{enumerate}
	\end{thm}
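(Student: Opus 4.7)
The strategy is to deduce Theorem \ref{main P3} from Theorem \ref{main} by specializing all invariants to those of a smooth hypersurface in $\mathbb{P}^3$, and then to sharpen the statement about the $r$-dimensional subspace $W$ to the full section space $H^0(\mc{L})$.

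First, I record the standard identities for $S\subset\mathbb{P}^3$ of degree $d$: the adjunction formula gives $K_S=(d-4)H$, while the Lefschetz hyperplane theorem applied to the exact sequence $0\to\mc{O}_{\mathbb{P}^3}(-d)\to\mc{O}_{\mathbb{P}^3}\to\mc{O}_S\to 0$ yields $q(S)=0$, $p_g(S)=\binom{d-1}{3}$, and hence $\chi(\mc{O}_S)=1+\binom{d-1}{3}$. Substituting these into the three conditions of Theorem \ref{main} and using $C\cdot K_S=(d-4)\deg(C)=\tfrac{r(d-4)d(d-1)}{2}$, a short polynomial manipulation shows that $K_S+H$ and $K_S+2H$ become $(d-3)H$ and $(d-2)H$ respectively, that the expected degree of $C$ simplifies to $\tfrac{r}{2}d(d-1)$, and that the expected degree of $\mc{L}$ reduces to $\tfrac{r}{2}\bigl(2\binom{d-1}{3}-d(d-2)(d-3)+2\bigr)+g-1$.

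For the forward direction, let $\mc{E}$ be an Ulrich bundle of rank $r$ on $S$. Theorem \ref{main} provides a triple $(C,W,\mc{L})$ with $W\subseteq H^0(\mc{L})$ base-point-free of dimension $r$ and satisfying the now-specialized conditions. It remains to upgrade $W$ to the full section space, i.e., to prove $h^0(\mc{L})=r$. From the defining sequence $0\to\mc{E}^*\to W\otimes\mc{O}_S\to\mc{L}\to 0$ used inside the proof of Theorem \ref{main}, together with the vanishing $H^0(\mc{E}^*)=0$ cited there from \cite[Lemma 2.1]{ciliberto2023ulrich}, the claim reduces to $H^1(\mc{E}^*)=0$. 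Serre duality identifies $H^1(\mc{E}^*)$ with $H^1\bigl(\mc{E}((d-4)H)\bigr)^{*}$, which vanishes because $\mc{E}$ is Ulrich (so $H^1(\mc{E}(-1))=H^1(\mc{E}(-2))=0$ directly) and because it is $0$-regular in the Castelnuovo-Mumford sense (so $H^1(\mc{E}(m))=0$ for $m\geq -1$); since $d-4\geq -2$ under the hypothesis $d\geq 2$, the vanishing follows in every case.

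For the converse direction, given $(C,\mc{L})$ with $h^0(\mc{L})=r$ satisfying (i), (ii), (iii), I take $W:=H^0(\mc{L})$ and invoke the converse half of Theorem \ref{main}, reading into the hypothesis that $\mc{L}$ is generated by its global sections (as required for the Lazarsfeld-Mukai construction to yield a locally free sheaf, and to ensure $W$ is a base-point-free linear series of dimension $r$). The main obstacle I foresee is the bookkeeping needed to verify the degree-of-$\mc{L}$ simplification from Theorem \ref{main}(iii) to Theorem \ref{main P3}(iii); the remainder is either direct substitution or an immediate application of the parent theorem together with the standard invariants of a smooth hypersurface in $\mathbb{P}^3$.
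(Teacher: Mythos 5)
Your proposal is correct, and it actually does more than the paper, which offers no written proof of Theorem \ref{main P3} and presents it as an immediate specialization of Theorem \ref{main} via $K_S=(d-4)H$, $\chi(\mc{O}_S)=1+\binom{d-1}{3}$ and $H^2=d$ (your numerical reductions check out; e.g.\ the degree of $\mc{L}$ matches because $\tfrac12(d-1)(d-4)+1=\tfrac12(d-2)(d-3)$). The genuinely non-trivial point you isolate --- that the $r$-dimensional subspace $W$ produced by Theorem \ref{main} is in fact all of $H^0(\mc{L})$ --- is needed to justify the statement as written, since Theorem \ref{main P3} asserts $h^0(\mc{L})=r$ and uses the full section space in the multiplication map, and the paper nowhere addresses this. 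Your argument for it is sound: from $0\to\mc{E}^*\to W\otimes\mc{O}_S\to\mc{L}\to 0$ and $H^0(\mc{E}^*)=0$, the claim reduces to $H^1(\mc{E}^*)\cong H^1(\mc{E}(K_S))^*=0$, which follows from the $0$-regularity of $\mc{E}$ (Remark \ref{proprietà Ulrich}) when $d\ge 3$, and from the Ulrich vanishing $H^1(\mc{E}(-2))=0$ when $d=2$. Your caveat in the converse direction is also well taken: Theorem \ref{main P3} omits the base-point-freeness of $H^0(\mc{L})$ on $C$ that Theorem \ref{main} requires (and that is needed for $K_{C,W,\mc{L}}$ to be locally free), and this hypothesis does not follow from (i)--(iii); it must be added to, or read into, the statement exactly as you do.
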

	
	\begin{oss} \label{disequazioni P3}
		The inequalities from Remark \ref{bounds} concerning the genus $g$ of the curve $C$ of Theorem \ref{main P3} take the following form:
		\begin{enumerate}[label=(\roman*)]
			\item $g\leq \frac{r}{8}d(d-4)((r+2)d+2(r-1))+\frac{9r^2d}{8}+1 $
			\item $g\geq \frac{r(r+1)}{4}d(d-1)(d-4)-r^2(\binom{d-1}{3}-d+1)+1$. 
		\end{enumerate} 
		
	\end{oss}
	
	The question of connectedness becomes particularly simple for surfaces in $\mathbb{P}^3$, as it reduces to a unique case where the curve fails to be connected.
	
	\begin{cor} \label{connessione P^3}
		Let \( S \subset \mathbb{P}^3 \) be a smooth projective surface of degree $d\geq 2$, embedded by the linear system $|H|$, where $H\in |\mc{O}_S(1)|$. Let $\mc{E}$ be the Ulrich bundle on $S$ corresponding to the pair $(C, \mc{L})$, as in Theorem \ref{main P3}. Then $C$ is disconnected if and only if
		\begin{equation} \label{connessione curva P^3}
			\begin{aligned}
				(S, \mc{O}_S(1), \mc{E})\cong (\mathbb{P}^1\times \mathbb{P}^1, \mc{O}_{\mathbb{P}^1\times \mathbb{P}^1}(1), \mc{O}_{\mathbb{P}^1\times \mathbb{P}^1}(1,0)^{\oplus r}) \   \text{or} \\[1ex] (S, \mc{O}_S(1), \mc{E})\cong (\mathbb{P}^1\times \mathbb{P}^1, \mc{O}_{\mathbb{P}^1\times \mathbb{P}^1}(1), \mc{O}_{\mathbb{P}^1\times \mathbb{P}^1}(0,1)^{\oplus r})
			\end{aligned}
		\end{equation}
		with $r \geq 2$. 
		
		In particular, in this case $C$ is the disjoint union of exactly $r$ lines. 
		
	\end{cor}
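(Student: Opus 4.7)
The plan is to specialise Corollary \ref{connessione} to $S\subset\mathbb{P}^3$. The crucial input is the classical fact that the only smooth surface of degree $d\ge2$ in $\mathbb{P}^3$ which admits a scroll structure $S=\mathbb{P}\mc{F}\to B$ with $\mc{O}_S(1)=\mc{O}_{\mathbb{P}\mc{F}}(1)$ is the smooth quadric $\mathbb{P}^1\times\mathbb{P}^1$ (with $B=\mathbb{P}^1$, $d=2$). Together with Remark \ref{caso speciale}, which lists all Ulrich bundles on this quadric, this reduces the dichotomy of Corollary \ref{connessione} to a handful of concrete possibilities inside $\mathbb{P}^3$.

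First, I would dispose of the case in which $S$ is not the smooth quadric. Then $(S,\mc{O}_S(1),\mc{E})$ cannot be isomorphic to any triple of the scroll form $(\mathbb{P}\mc{F},\mc{O}_{\mathbb{P}\mc{F}}(1),\pi^*(\mc{G}(\det\mc{F})))$, so condition (i) of Corollary \ref{connessione} is automatic and $C$ is irreducible, hence connected.

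Next, I would analyse $S=\mathbb{P}^1\times\mathbb{P}^1$ via Remark \ref{caso speciale}: every Ulrich bundle on $S$ splits as $\mc{O}(1,0)^{\oplus\alpha}\oplus\mc{O}(0,1)^{\oplus\beta}$ with $\alpha+\beta=r$. Realizing $\mathbb{P}^1\times\mathbb{P}^1=\mathbb{P}(\mc{O}_{\mathbb{P}^1}(1)\oplus\mc{O}_{\mathbb{P}^1}(1))$ over either $\mathbb{P}^1$, and using that the only rank $r$ bundle on $\mathbb{P}^1$ with vanishing cohomology is $\mc{O}_{\mathbb{P}^1}(-1)^{\oplus r}$, one checks directly that the pullback form of Corollary \ref{connessione}(i) produces precisely $\mc{O}(1,0)^{\oplus r}$ or $\mc{O}(0,1)^{\oplus r}$, while mixed bundles with $\alpha,\beta>0$ cannot be pullbacks from either ruling. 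Since the exceptional triples of Corollary \ref{connessione}(ii) exhaust only the cases $r=1$ (a single line, hence still irreducible), one concludes that $C$ is disconnected exactly when $\mc{E}\cong\mc{O}(1,0)^{\oplus r}$ or $\mc{O}(0,1)^{\oplus r}$ with $r\ge 2$.

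Finally, the count of components follows at once from Corollary \ref{prop c^2=0}: with $B=\mathbb{P}^1$ and $d=2$ the number of disjoint fibers forming $C$ is $t=r(g(B)-1+d)=r$, so $C$ is the disjoint union of exactly $r$ lines. The only genuinely delicate step is the appeal, at the outset, to the classification of smooth scrolls in $\mathbb{P}^3$; once this is granted, the rest of the argument is a straightforward matching of the general dichotomy of Corollary \ref{connessione} with the very restricted geometry available inside $\mathbb{P}^3$.
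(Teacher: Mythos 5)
Your proposal is correct and follows essentially the same route as the paper: both reduce to Corollary \ref{connessione}, show that the scroll alternative forces $S$ to be the smooth quadric with $\mc{E}$ a pullback $\mc{O}_{\mathbb{P}^1\times\mathbb{P}^1}(1,0)^{\oplus r}$ or $\mc{O}_{\mathbb{P}^1\times\mathbb{P}^1}(0,1)^{\oplus r}$, exclude $r=1$ via the spinorial exception, and obtain the count of $r$ lines from Corollary \ref{prop c^2=0}. The only cosmetic difference is that you invoke the classical classification of smooth scrolls in $\mathbb{P}^3$ (and the splitting of Ulrich bundles on the quadric), whereas the paper derives $d=2$ directly by comparing $K_S=(d-4)H$ with the ruled-surface canonical formula and then computes $\mc{G}\cong\mc{O}_{\mathbb{P}^1}(-1)^{\oplus r}$ from the cohomological vanishing.
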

	\begin{proof}
		If (\ref{connessione curva P^3}) holds, then both (i) and (ii) of Corollary \ref{connessione} fail. Indeed, (i) fails choosing $\mc{F}=O_{\mathbb{P}^1}(1)^{\oplus 2}, \ B=\mathbb{P}^1, \  H=\mc{O}_{\mathbb{P}\mc{F}}(1),  \ \mc{G}=O_{\mathbb{P}^1}(-1)$
		and (ii) fails since $r\geq 2$. 
		
		Hence, $C$ is disconnected and, in particular, by Lemma \ref{prop c^2=0}, it is a disjoint union of $t=r(g(B)-1+d)= r$ lines. 
		
		On the other hand, by Corollary \ref{connessione}, if the curve $C$ is disconnected, $S$ is a ruled surface 
		\[(S, \mc{O}_S(1), \mc{E})\cong (\mathbb{P}\mc{F}, \mc{O}_{\mathbb{P}\mc{F}}(1), \pi^*(\mc{G}(\det (\mc{F}))),\]
		where the data $(\mc{F}, B,\mc{G})$ are exactly those specified in Corollary \ref{connessione}(i), and, in particular, the triple $(S, \mc{O}_S(1), \mc{E})$ is not isomorphic to either of the spinorial configurations of Corollary \ref{connessione}(ii). 
		Comparing the two canonical expressions $K_S=(d-4)H$ and $K_S=-2H+\pi^*(K_B+ \det (\mc{F})),$
		one has
		\[(d-2)H=\deg (K_B+ \det (\mc{F}))f,\]
		where $f$ is a generic fiber of $S$.
		Intersecting with a fiber $f$,
		one has $d=2$.
		Hence, \newline $\deg (K_B+ \det (\mc{F}))=0$, that is, $2g(B)-2+d=0$, which implies $g(B)=0$.
		
		Thus, $B=\mathbb{P}^1$ and $S\cong \mathbb{P}^1\times \mathbb{P}^1$.
		Since $H^{q}(B,\mc{G})=0$ for all $q\geq 0$ and every vector bundle on $\mathbb{P}^1$ splits as a direct sum of line bundles, we have that $\mc{G}\cong \bigoplus_{i=1}^{r}\mc{O}_{\mathbb{P}^{1}}(-1)$.
		Moreover $\deg(\det\mc{F})=-\deg (K_B)=2$, hence $\det\mc{F}\cong\mc{O}_{\mathbb{P}^{1}}(2)$.
		Hence,
		\(
		\mc{G}\otimes\det\mc{F}\cong
		\mc{O}_{\mathbb{P}^{1}}(1)^{\oplus r},
		\)
		so that $\mc{E}$ coincides with the pull-back $\pi^{*}(\mc{O}_{\mathbb{P}^{1}}(1)^{\oplus r})$, that is, 
		\begin{center}
			$\mc{E}\cong \mc{O}_{\mathbb{P}^1\times \mathbb{P}^1}(1,0)^{\oplus r}$ or $\mc{E}\cong \mc{O}_{\mathbb{P}^1\times \mathbb{P}^1}(0,1)^{\oplus r}$.
		\end{center} 
		and, since the single-spinor cases have been excluded, we must have $r \geq 2$.
		
		This exactly gives the configuration in (\ref{connessione curva P^3}), completing the proof.
	\end{proof}
	\section{Ulrich bundles and Noether-Lefschetz Loci} The goal of this section is to show that \textit{smooth surfaces of fixed degree $d \geq 4$ in $\mathbb{P}^3$ admitting an Ulrich line bundle form a component of the Noether--Lefschetz locus}. Before turning to the precise statement and its proof, we provide some preliminary remarks and background. 
	\begin{notation}
		The parameter space of curves of degree $n$ and genus $g$ in $\mathbb{P}^3$ is denoted $H_{n,g}$ and it is called the \emph{Hilbert scheme} of curves of degree $n$ and genus $g$. One can verify that $H_{n,g}$ carries the natural structure of a projective variety (see \cite[Remark 9.8.1 and Ch. IV, \S6]{hartshorne2013algebraic}).
		
		For a fixed integer $d\geq 1$, we denote by $\mathbb{P}^N=\mathbb{P}^{\binom{d+3}{3}-1}$ the projective space whose points correspond to surfaces of degree $d$ in $\mathbb{P}^3$ and by $\mc{U}_d\subseteq \mathbb{P}^N$ the open subset consisting of points corresponding to smooth surfaces.
	\end{notation}	
	
	\begin{oss}[\textbf{Noether-Lefschetz}]
		According to the classical Noether-Lefschetz Theorem (see for instance \cite{lefschetz1924analysis, grothendieckcohomologie, griffiths1985noether}), a very general surface $S \subset \mathbb{P}^3$ of degree $d \geq 4$ has Picard group generated by the hyperplane class, that is $\mathrm{Pic}(S) \cong \mathbb{Z}H$. The exceptional locus of surfaces for which the Picard group is larger than $\mathbb{Z}H$ is the so-called \textit{Noether--Lefschetz locus} $NL(d)$. Recall that a property holds for a very general point of a projective variety \(X\) when it is satisfied outside a countable union of proper closed subvarieties of \(X\); then $NL(d)$ can be described as a countable union of proper closed subvarieties of the parameter space $\mathcal{U}_d$ of smooth degree-$d$ surfaces. If $W$ is an irreducible component of $NL(d)$, then its codimension in $\mathcal{U}_d$ is bounded between $d-3$ and the geometric genus $p_g(d) = \binom{d-1}{3}$ (see \cite{carlson1983infinitesimal} or \cite{green1984koszul}). Those components which achieve the maximal codimension $p_g(d)$ are called \emph{general components} of the Noether--Lefschetz locus.
	\end{oss}	
	
	We next present a corollary of Theorem \ref{main} that provides a characterization of Ulrich line bundles on a surface $S$, formulated in the spirit of their correspondence with curves on the surface. Although the statement essentially coincides with \cite[Prop. 2.4]{ciliberto2023ulrich} (for the case $S\subset \mathbb{P}^3$, see also \cite[Prop. 6.2]{beauville2000determinantal} together with \cite[Prop. 2.2]{beauville2018introduction}), we present a short outline of an independent argument relying on Theorem \ref{main}.
	\begin{cor}[\textbf{Ulrich line bundles on surfaces}]\label{main rango 1}
		Let \( S \subset \mathbb{P}^N \) be a smooth projective surface of degree $d\geq 2$, embedded by the linear system $|H|$, where $H\in |\mc{O}_S(1)|$. Then, there exists an Ulrich line bundle $\mc{E}$ on $S$ if and only if there exists a smooth (possibly disconnected) curve $C \subset S $ of genus $g$ such that: 
		\begin{enumerate} [label=(\roman*)]
			\item $H^1(\mc{O}_C(K_S+H))=0$;
			\item $H^0(\mc{O}_S(K_S+2H-C))=0$
			\item $\deg(C)= \frac{1}{2}(K_S+3H)\cdot H, \quad g=C\cdot K_S+1+d-\chi(\mc{O}_S)$. 
		\end{enumerate}
	\end{cor}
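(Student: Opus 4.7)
The plan is to deduce this corollary directly from Theorem \ref{main} by specializing to the rank-one case. The key observation is that when $r=1$, a one-dimensional base-point-free subspace $W\subseteq H^0(\mc{L})$ is spanned by a nowhere-vanishing section of $\mc{L}$, which forces $\mc{L}\cong\mc{O}_C$. Conversely, given a curve $C$ one can always take $\mc{L}:=\mc{O}_C$ together with $W:=\mathbb{C}\cdot 1\subseteq H^0(\mc{O}_C)$, the line spanned by the constant section, which is nowhere vanishing and therefore base-point-free even when $C$ is disconnected. Thus the data $(C,W,\mc{L})$ in Theorem \ref{main} for $r=1$ are equivalent to the data of a curve $C$ alone.

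Under this identification I would translate each of conditions (i), (ii), (iii) of Theorem \ref{main} into the corresponding one in the corollary. Condition (i) transcribes verbatim with $\mc{L}=\mc{O}_C$. For condition (ii) the multiplication map reduces to the restriction
\[
H^0(\mc{O}_S(K_S+2H))\longrightarrow H^0(\mc{O}_C(K_S+2H)),
\]
and tensoring the ideal sequence $0\to\mc{O}_S(-C)\to\mc{O}_S\to\mc{O}_C\to 0$ by $\mc{O}_S(K_S+2H)$ identifies the kernel of this restriction with $H^0(\mc{O}_S(K_S+2H-C))$; thus injectivity is equivalent to (ii) of the corollary. For condition (iii), substituting $r=1$ and $\deg\mc{L}=0$ into the two numerical formulas of Theorem \ref{main}, and using $H^2=d$, yields precisely $\deg(C)=\tfrac{1}{2}(K_S+3H)\cdot H$ and $g=C\cdot K_S+1+d-\chi(\mc{O}_S)$.

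I do not expect any serious obstacle, since the argument is essentially a direct specialization of Theorem \ref{main}. The only step requiring a moment of care is the converse direction when $C$ is disconnected: one must choose $W$ to be the line generated by the constant section rather than the full space $H^0(\mc{O}_C)$, so that the dimension condition $\dim W = r = 1$ is preserved while base-point-freeness persists.
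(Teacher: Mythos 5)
Your proposal is correct and follows essentially the same route as the paper: both arguments specialize Theorem \ref{main} to $r=1$, use the nowhere-vanishing generator of the one-dimensional base-point-free $W$ to identify $\mc{L}\cong\mc{O}_C$, reduce the multiplication map to the restriction $H^0(\mc{O}_S(K_S+2H))\to H^0(\mc{O}_C(K_S+2H))$ whose kernel is $H^0(\mc{O}_S(K_S+2H-C))$, and read off the numerical conditions from $\deg\mc{L}=0$. Your write-up is in fact slightly more explicit than the paper's (which leaves the kernel identification via the ideal sequence implicit), and your care about choosing $W=\mathbb{C}\cdot 1$ rather than all of $H^0(\mc{O}_C)$ on a disconnected $C$ is exactly the right point to flag.
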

	\begin{proof}
		Write the multiplication map supplied by Theorem \ref{main} as $\varphi = \rho \circ \psi$
		with 
		\newline \(
		\psi : W \otimes H^0(\mathcal{O}_S(K_S+2H)) \rightarrow 
		W \otimes H^0(\mathcal{O}_C(K_S+2H)),
		\)
		and where $\rho$ is multiplication by the generator $\sigma \in W$. Since $W$ is base-point-free of dimension one, $\sigma$ restricts to nonzero constants on each connected component of $C$, so $\rho$ is injective; thus $\varphi$ is injective if and only if $\psi$ is.
		
		If $\mathcal{E}$ is an Ulrich line bundle, Theorem \ref{main} furnishes a triple $(C,W,\mathcal{L})$ with 
		$\mathcal{L} \cong \mathcal{O}_C$, and hence conditions (i)--(iii) hold. Conversely, given (i)--(iii), 
		the factorization and injectivity just proved verify condition (ii) of Theorem \ref{main}, and the numerical 
		conditions yield the required degrees, so Theorem \ref{main} produces an Ulrich line bundle on $S$.
		\end{proof}		
		For surfaces in $\mathbb{P}^3$, a curve satisfying conditions (i)--(iii) of Corollary \ref{main rango 1}  is necessarily arithmetically Cohen–Macaulay. Before proving this statement, we recall the definition of ACM curves.
	\begin{definition}
		A smooth projective curve $C\subseteq \mathbb{P}^N$ is said \emph{arithmetically Cohen-Macaulay (ACM)} if it satisfies
		\(H^1(\mc{I}_{C/\mathbb{P}^N}(j))=0 \text{ for all } j\in \mathbb{Z}.\)
	\end{definition}
	\begin{oss}\label{ACM equivale risoluzione}
		A smooth curve $C\subset \mathbb{P}^3$ is ACM if and only if it admits a minimal free resolution 
		\begin{equation} \label{risoluzione libera min}
			0\to \bigoplus_{i=1}^{n+1}\mc{O}_{\mathbb{P}^3}(-m_i)\to \bigoplus_{j=1}^{n+2}\mc{O}_{\mathbb{P}^3}(-d_j)\to \mc{I}_{C/\mathbb{P}^3}\to 0. 
		\end{equation}
		Indeed, if $C$ is ACM, the existence of (\ref{risoluzione libera min}) is proved in \cite[Thm. A]{beauville2000determinantal}.
		Conversely, it suffices to observe that \(H^1(\bigoplus_{j=1}^{n+2}\mc{O}_{\mathbb{P}^3}(-d_j+k))= H^2(\bigoplus_{i=1}^{n+1}\mc{O}_{\mathbb{P}^3}(-m_i+k))=0\) for all integres $k$. 
		Moreover, the integers $m_i$ satisfy the strict inequalities 
		\begin{equation}\label{risoluzione libera diseq}
			m_i > \min\{d_j : j=1,\dots,n+2\}.
		\end{equation} 
		Indeed, writing the first map in the resolution as a matrix $[A_{ij}]$, minimality of the resolution forces each entry to have degree $m_i - d_j$ if positive, and zero otherwise. If for some $i$ one had $m_i \le \min\{d_j\}$, all entries in the $i$-th row would vanish. But the ideal $I_{C/\mathbb{P}^3}$ is generated by the $(n+1)\times(n+1)$ minors of this matrix, so a zero row would make all these minors vanish, so that $I_{C/\mathbb{P}^3}=0$, a contradiction. Hence the inequalities hold for every $i$.
		
	\end{oss}
	\begin{oss}\label{curve ulrich}
		Let \( S \subset \mathbb{P}^3 \) be a smooth surface of degree $d\geq 4$, embedded by the linear system $|H|$, where $H\in |\mc{O}_S(1)|$, and let $C\subset S$ be a smooth curve of genus $g$ as in Corollary \ref{main rango 1}, that is, 
		
		\begin{enumerate} [label=(\roman*)]
			\item $H^1(\mc{O}_C((d-3)H))=0$;
			\item $H^0(\mc{O}_S((d-2)H-C))=0$
			\item $\deg(C)= \frac{1}{2}d(d-1), \quad g=\frac{1}{6}(d-2)(d-3)(2d+1)$. 
		\end{enumerate}
		
		Then:
		\begin{enumerate}[label=(\roman*)]
			\item $C$ is irreducible;
			\item $\mc{O}_S(C)$ is an Ulrich line bundle on $S$;
			\item $C$ is ACM.
		\end{enumerate}
	\end{oss}
		\begin{proof}
			Item (i) follows directly from Corollary \ref{connessione P^3}, simply by noting that $d\geq 4$, while (ii) is an immediate consequence of the constructions of Theorem \ref{main} and Corollary \ref{main rango 1}.
			
			To prove (iii), fix an integer $j\in \mathbb{Z}$ and consider the short exact sequence
			\begin{equation}\label{sec ideali}
				0\to \mc{I}_{S/\mathbb{P}^3}(j)\to \mc{I}_{C/\mathbb{P}^3}(j)\to \mc{I}_{C/S}(j)\to 0.
			\end{equation}
			Since $H^1(\mc{O}_{\mathbb{P}^3}(\ell)) = 0$ for all $\ell \in \mathbb{Z}$, we immediately obtain $H^1(\mc{I}_{S/\mathbb{P}^3}(j)) = 0$.
			
			Moreover, by \cite[(3.1)]{beauville2018introduction} together with point (ii), we have $H^1(\mc{O}_S(C)(\ell)) = 0$ for all $\ell \in \mathbb{Z}$, equivalently $H^1(\mc{O}_S(aH - C)) = 0$ for all integers $a$. Hence, $H^1(\mc{I}_{C/S}(j)) = 0$.
			
			Then, by the exactness of (\ref{sec ideali}), $H^1(\mc{I}_{C/\mathbb{P}^3}(j)) = 0$ for all $j$, proving that $C$ is ACM.
		\end{proof}
		To prove our main theorem, we will follow the approach of Ciliberto-Lopez \cite{ciliberto1991existence}, whose Lemma 1.2 provides a criterion for identifying components of the Noether--Lefschetz locus. The following two lemmas establish the preliminary results required to apply this criterion to our curves.

	\begin{lem}\label{prima condizione}
		Let $C\subset \mathbb{P}^3$ be a smooth $ACM$ curve and let $d\geq 4$ be an integer. Then \(\mc{I}_C \text{ is $(d-1)$-regular if and only if } H^1(\mc{O}_C(d-3))=0.\)
	\end{lem}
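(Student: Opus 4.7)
The plan is to unpack Castelnuovo--Mumford regularity and verify each of the three cohomological vanishings required for $(d-1)$-regularity of $\mc{I}_C$, showing that two of them are automatic and the third is exactly the condition $H^1(\mc{O}_C(d-3))=0$.

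Recall that $\mc{I}_C$ is $(d-1)$-regular precisely when
\[H^1\bigl(\mc{I}_C(d-2)\bigr)=0,\qquad H^2\bigl(\mc{I}_C(d-3)\bigr)=0,\qquad H^3\bigl(\mc{I}_C(d-4)\bigr)=0.\]
The first of these is immediate from the ACM hypothesis on $C$, which gives $H^1(\mc{I}_C(j))=0$ for every $j\in\mathbb{Z}$.

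For the remaining two, I would use the short exact sequence of ideal sheaves
\[0\to \mc{I}_C(k)\to \mc{O}_{\mathbb{P}^3}(k)\to \mc{O}_C(k)\to 0\]
and pass to the long exact sequence in cohomology. Taking $k=d-3$ and using that $H^1(\mc{O}_{\mathbb{P}^3}(d-3))=H^2(\mc{O}_{\mathbb{P}^3}(d-3))=0$, one obtains the isomorphism $H^1(\mc{O}_C(d-3))\cong H^2(\mc{I}_C(d-3))$, so the middle regularity condition is exactly the hypothesis under consideration. Taking instead $k=d-4$ and noting that $C$ is a curve (so $H^2(\mc{O}_C(d-4))=0$) together with $H^3(\mc{O}_{\mathbb{P}^3}(d-4))=0$ (valid for $d\geq 4$, since $d-4\geq 0 > -4$), we find $H^3(\mc{I}_C(d-4))=0$ automatically.

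Combining the three steps yields the stated equivalence. The argument is essentially bookkeeping with the standard Bott--type vanishings on $\mathbb{P}^3$; there is no genuine obstacle, and the only point requiring a small verification is confirming that each auxiliary cohomology group on $\mathbb{P}^3$ indeed vanishes in the relevant range of twists, for which the hypothesis $d\geq 4$ is used precisely to ensure $H^3(\mc{O}_{\mathbb{P}^3}(d-4))=0$.
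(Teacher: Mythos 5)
Your proof is correct and follows essentially the same route as the paper: both reduce $(d-1)$-regularity to the vanishings of $H^1(\mc{I}_C(d-2))$ (automatic from ACM), $H^2(\mc{I}_C(d-3))$ (identified with $H^1(\mc{O}_C(d-3))$ via the ideal sheaf sequence), and $H^3(\mc{I}_C(d-4))$ (automatic). The only difference is that you spell out the vanishing of $H^3(\mc{I}_C(d-4))$ via the same exact sequence, where the paper simply asserts it.
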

	\begin{proof}
		Since $C$ is ACM, $\mathcal{I}_C$ is $(d-1)$-regular if and only if
		\(
		H^2(\mathcal{I}_C(d-3)) = H^3(\mathcal{I}_C(d-4)) = 0.
		\)
		From the short exact sequence
		\[
		0 \to \mathcal{I}_C \to \mathcal{O}_{\mathbb{P}^3} \to \mathcal{O}_C \to 0,
		\]
		twisting by $\mathcal{O}_{\mathbb{P}^3}(d-3)$ and passing to cohomology gives $H^2(\mathcal{I}_C(d-3)) \cong H^1(\mathcal{O}_C(d-3))$, while $H^3(\mathcal{I}_C(d-4)) = 0$ automatically, implying the lemma.
	\end{proof}
%
	
	\begin{lem}\label{seconda condizione}
		Let $C\subset \mathbb{P}^3$ be a smooth irreducible ACM curve such that $\mc{I}_C$ is $(d-1)$-regular, with $d\geq 4$. Then
		\begin{enumerate}[label=(\roman*)]
			\item there exists a smooth surface $S$ of degree $d$ containing $C$;
			\item for any smooth surface $S$ lying in the linear system $|\mc{I}_C(d)|$, one has the isomorphism
			\(H^0(\mc{I}_C((d-2))\cong H^0(\mc{O}_S((d-2)H-C))\),
			where $H\in |\mc{O}_S(1)|.$
		\end{enumerate}
		
	\end{lem}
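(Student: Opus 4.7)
The plan is to handle the two parts independently: (i) via a Bertini-type argument exploiting the $(d-1)$-regularity of $\mc{I}_C$, and (ii) via a direct cohomological computation from the standard short exact sequence relating the ideal sheaves of $C$ and $S$.

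For (i), I would first observe that the $(d-1)$-regularity of $\mc{I}_C$ implies, by Castelnuovo--Mumford regularity, that $\mc{I}_C(d-1)$ is globally generated, and hence so is $\mc{I}_C(d)$. In particular, the base locus of the linear system $|\mc{I}_C(d)|\subseteq |\mc{O}_{\mathbb{P}^3}(d)|$ coincides (scheme-theoretically) with $C$, so classical Bertini already guarantees that a general member $S \in |\mc{I}_C(d)|$ is smooth away from $C$. The remaining point is smoothness along $C$. For a fixed $p \in C$, since $C$ is smooth of codimension $2$, the fiber $(\mc{I}_C/\mathfrak{m}_p\mc{I}_C)(d-1)$ is naturally identified with $N^*_{C/\mathbb{P}^3}(d-1)|_p$; global generation of $\mc{I}_C(d-1)$ then means that $H^0(\mc{I}_C(d-1)) \to N^*_{C/\mathbb{P}^3}(d-1)|_p$ is surjective. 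Multiplying by a linear form nonvanishing at $p$, the analogous surjectivity holds for $\mc{I}_C(d)$, so there exists $f \in H^0(\mc{I}_C(d))$ whose image in $N^*_{C/\mathbb{P}^3}(d)|_p$ is nonzero, i.e., whose differential at $p$ is nonzero on a normal direction. The zero locus of such an $f$ is therefore smooth at $p$. A dimension count then finishes the argument: the sublocus of $|\mc{I}_C(d)|$ consisting of surfaces singular at a given $p\in C$ has codimension at least $2$, while $C$ is $1$-dimensional, so a general surface in $|\mc{I}_C(d)|$ is smooth at every point of $C$ and hence everywhere.

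For (ii), I would work with the standard exact sequence
\[
0 \to \mc{I}_{S/\mathbb{P}^3} \to \mc{I}_{C/\mathbb{P}^3} \to \mc{I}_{C/S} \to 0.
\]
Since $S$ is a surface of degree $d$, one has $\mc{I}_{S/\mathbb{P}^3} \cong \mc{O}_{\mathbb{P}^3}(-d)$, while $\mc{I}_{C/S} \cong \mc{O}_S(-C)$. Twisting by $\mc{O}_{\mathbb{P}^3}(d-2)$ and passing to cohomology gives
\[
0 \to H^0(\mc{O}_{\mathbb{P}^3}(-2)) \to H^0(\mc{I}_C(d-2)) \to H^0(\mc{O}_S((d-2)H-C)) \to H^1(\mc{O}_{\mathbb{P}^3}(-2)).
\]
Both outer terms vanish, yielding the desired isomorphism.

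The only genuinely delicate step is the smoothness of $S$ along $C$ in part (i); everything else reduces either to a straightforward application of Bertini or to a one-line use of the fact that $\mc{O}_{\mathbb{P}^3}(-2)$ is acyclic in degrees $0$ and $1$. I expect the normal-bundle step of (i) to be the main place where $(d-1)$-regularity (rather than merely $d$-regularity) is genuinely used.
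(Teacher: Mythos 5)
Your proposal is correct and follows essentially the same route as the paper: part (ii) is the identical computation with the sequence $0\to\mc{I}_{S/\mathbb{P}^3}(d-2)\to\mc{I}_{C/\mathbb{P}^3}(d-2)\to\mc{I}_{C/S}(d-2)\to 0$ and the acyclicity of $\mc{O}_{\mathbb{P}^3}(-2)$, while for part (i) the paper likewise derives global generation of $\mc{I}_C(d-1)$ from $0$-regularity and then simply cites \cite[Proof of Lemma 4.1]{bruzzo2021existence} for the existence of a \emph{smooth} member of $|\mc{I}_C(d)|$. The only difference is that you spell out that cited step in full (Bertini off the base locus $C$, surjectivity onto the conormal fibre $N^*_{C/\mathbb{P}^3}(d)|_p$, and the codimension-two dimension count along $C$), which is exactly the standard argument behind the reference and is carried out correctly.
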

	
	\begin{proof}
		(i) Observe that $\mc{I}_C(d-1)$ is 0-regular, hence, it is globally generated. Then, by \cite[Thm. 1.8.5(ii)]{lazarsfeld2017positivity} the natural map
		\(H^0(\mc{I}_C(d-1))\otimes H^0(\mc{O}_{\mathbb{P}^3}(1))\to H^0(\mc{I}_C(d))\)
		is surjective and, since $H^0(\mc{I}_C(d-1))\neq 0$, one has $H^0(\mc{I}_C(d))\neq 0$. Hence, there exists a surface $S\in |\mc{I}_C(d)|$, that is, a degree $d$ surface containing $C$. Actually, one can prove the existence of a smooth degree $d$ surface containing $C$ -- see for istance \cite[Proof of Lemma 4.1]{bruzzo2021existence}. 
		
		(ii) Let $S\in |\mc{I}_C(d)|$ be a smooth surface. Then, one has the following short exact sequence
		\begin{equation} \label{sec d-2}
			0\to \mc{I}_{S/\mathbb{P}^3}(d-2)\to \mc{I}_{C/\mathbb{P}^3}(d-2)\to \mc{I}_{C/S}(d-2)\to 0
		\end{equation}
		Observe that $\mc{I}_{S/\mathbb{P}^3}(d-2)\cong \mc{O}_{\mathbb{P}^3}(-2)$ and, hence, 
		\(H^0(\mc{I}_{S/\mathbb{P}^3}(d-2))=H^1(\mc{I}_{S/\mathbb{P}^3}(d-2))=0.\)
		Then, from the cohomology of the exact sequence (\ref{sec d-2}), one has
		\[H^0(\mc{I}_C(d-2))\cong H^0(\mc{I}_{C/S}(d-2))\cong H^0(\mc{O}_S((d-2)H-C)),\]
		that is, (ii). 
	\end{proof}
	We are now ready to state and prove the main result of this section.
	\begin{definition}
		Let $d$ be a positive integer. We define 
		\[Ul_{r,d}:=\{S\in \mc{U}_d \text{ : $uc(S)=r$}\}.\]
	\end{definition}
	\begin{oss}
		The locus $\mathrm{Ul}_{r,d}$ is constructible. Indeed, for any $n \ge 2$ and integers $d,r \ge 1$, let $V_{r,d}$ denote the set of degree-$d$ hypersurfaces in $\mathbb{P}^n$ carrying at least one Ulrich bundle of rank $r$. Writing $S_k$ for homogeneous polynomials of degree $k$, consider the determinant map 
		$\alpha : \operatorname{Mat}_{r \times r}(S_1) \to S_{rd}$ 
		and the map 
		$\beta : S_d \to S_{rd}, F \mapsto F^r$. 
		Both maps are regular, so by Chevalley's theorem $\beta^{-1}(\operatorname{Im}\alpha) = V_{r,d}$ is constructible. The subset where the minimal Ulrich rank equals $r$, 
		$\mathrm{Ul}_{r,d} = V_{r,d} \setminus (V_{1,d} \cup \cdots \cup V_{r-1,d})$, 
		is then constructible as well. In this paper we focus on $\mathbb{P}^3$, but the argument holds in any dimension.
	\end{oss}
	\begin{thm}\label{componente NL}
		Let $d\geq 4$ be an integer. Then, $Ul_{1,d}$ is the general component of the Noether-Lefschetz locus $NL(d)$ made of surfaces containing a smooth ACM curve whose ideal is given by the $(d-1)\times (d-1)$ minors of a $(d-1)\times d$ matrix of linear forms. 
	\end{thm}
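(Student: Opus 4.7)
The plan is to combine the curve-based translation of the Ulrich condition (Corollary \ref{main rango 1} and Remark \ref{curve ulrich}) with the Ciliberto--Lopez criterion from \cite{ciliberto1991existence} for recognizing irreducible components of $NL(d)$. First, I would identify the geometry of the curves involved. Given $S \in Ul_{1,d}$, Corollary \ref{main rango 1} together with Remark \ref{curve ulrich} produces a smooth, irreducible, ACM curve $C \subset S$ of degree $n = d(d-1)/2$ and genus $g = (d-2)(d-3)(2d+1)/6$, satisfying $H^1(\mc{O}_C(d-3)) = 0$; by Lemma \ref{prima condizione} this means $\mc{I}_C$ is $(d-1)$-regular.

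Next, I would pin down the resolution shape. A direct Hilbert polynomial computation, combined with the $(d-1)$-regularity of $\mc{I}_C$, the ACM hypothesis, and the strict inequalities $m_i > \min\{d_j\}$ of Remark \ref{ACM equivale risoluzione}, forces the minimal free resolution of $\mc{I}_C$ to take the form
\[ 0 \to \mc{O}_{\mathbb{P}^3}(-d)^{\oplus(d-1)} \to \mc{O}_{\mathbb{P}^3}(-(d-1))^{\oplus d} \to \mc{I}_C \to 0. \]
By the Hilbert--Burch theorem, $\mc{I}_C$ is cut out by the $(d-1)\times(d-1)$ minors of a $(d-1)\times d$ matrix of linear forms, establishing the determinantal description. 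Conversely, given a smooth curve $C$ with such a resolution, Lemma \ref{seconda condizione}(i) provides a smooth $S$ of degree $d$ through $C$; Lemma \ref{prima condizione} and Lemma \ref{seconda condizione}(ii) verify conditions (i) and (ii) of Corollary \ref{main rango 1}, while the numerical data (iii) is built into the resolution, so $\mc{O}_S(C)$ is an Ulrich line bundle and $S \in Ul_{1,d}$.

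Now let $H_0 \subset H_{n,g}$ denote the locus of smooth ACM curves with the above resolution. Because every such curve is determined by its $(d-1)\times d$ matrix of linear forms modulo the natural $\mathrm{GL}_{d-1}\times\mathrm{GL}_d$ action, $H_0$ is irreducible. Form the incidence variety
\[ I = \{(C,S) \, : \, C\in H_0,\ S \in \mc{U}_d,\ C\subset S\}, \]
with projections $p_1 : I \to H_0$ and $p_2 : I \to \mc{U}_d$. By Lemma \ref{seconda condizione}(i) the projection $p_1$ is surjective with fibers an open subset of $|\mc{I}_C(d)|$, so $\overline{p_2(I)}$ is an irreducible closed subset of $\mc{U}_d$, and the two implications above identify $\overline{p_2(I)} = \overline{Ul_{1,d}}$. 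It remains to apply Lemma~1.2 of \cite{ciliberto1991existence}: that criterion concludes that $\overline{p_2(I)}$ is an irreducible component of $NL(d)$ provided the parameter count
\[ \dim H_0 + \dim|\mc{I}_C(d)| - \dim p_2^{-1}([S]) = \dim \mc{U}_d - \binom{d-1}{3} \]
holds, where the fiber dimension corresponds to the family of admissible Ulrich line bundles on a fixed $S$.

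The main obstacle is precisely this codimension calculation: one must compute $\dim H_0$ from the dimension $4(d-1)d$ of the space of linear-form matrices minus $\dim(\mathrm{GL}_{d-1}\times\mathrm{GL}_d)$ (and a twist by scalars), compute $\dim|\mc{I}_C(d)|$ from the twisted resolution, and identify the generic fiber of $p_2$ as parameterizing the finite set of Ulrich line bundles on $S$ (which exists by Remark \ref{stability} since rank-one Ulrich sheaves are stable, hence simple). Matching the resulting codimension to $\binom{d-1}{3} = p_g(d)$ both confirms, via the Carlson--Green--Griffiths bound, that $\overline{Ul_{1,d}}$ is a \emph{general} component of $NL(d)$, and completes the theorem.
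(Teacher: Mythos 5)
Your overall architecture is the same as the paper's: identify $Ul_{1,d}$ with the locus of surfaces containing a smooth ACM curve admitting the linear determinantal resolution $0\to \mc{O}_{\mathbb{P}^3}(-d)^{\oplus(d-1)}\to \mc{O}_{\mathbb{P}^3}(1-d)^{\oplus d}\to \mc{I}_C\to 0$, and then feed the corresponding family of curves into the Ciliberto--Lopez criterion. Two points, however, are genuine gaps rather than omitted routine details. First, in pinning down the resolution you need to know not only that $d_j=d-1$ for all $j$ (which does follow from $(d-1)$-regularity together with $H^0(\mc{I}_C(d-2))=0$) but also that the number of generators is exactly $d$; the paper obtains $h^0(\mc{I}_C(d-1))=h^2(\mc{O}_S(C)(-3))=\chi(\mc{O}_S(C)(-3))=d$ from the Ulrich property and Remark \ref{proprietà Ulrich}, and then $m_i\ge d$ together with $\sum m_i=d(d-1)$ (from \cite{peskine1974liaison}) forces $m_i=d$. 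Your ``direct Hilbert polynomial computation'' can be made to work using ACM-ness and regularity, but as written it is an assertion, not an argument.

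Second, and more seriously, your dimension count is set up incorrectly: the generic fiber of $p_2$ over $S$ is not finite. It is a finite union of (open subsets of) linear systems $|C|$ on $S$, one for each admissible Ulrich line bundle, and each such $|C|$ has dimension $h^0(\mc{O}_S(C))-1=d-1$ by Remark \ref{proprietà Ulrich}. If you take $\dim p_2^{-1}([S])=0$, the codimension of $\overline{p_2(I)}$ comes out to $p_g(d)-(d-1)$ rather than $p_g(d)$, and your maximal-codimension argument for both componenthood and generality collapses. You also explicitly defer the computation of $\dim H_0$ (calling it ``the main obstacle''); the paper instead proves $H^1(N_C)=0$ from the resolution, deduces $\dim W=\chi(N_C)=4\deg(C)$, and invokes \cite[Lemma 1.2]{ciliberto1991existence} together with $h^0(\mc{O}_C(d-4))=p_g(d)$. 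Your proposed count of matrices modulo $\mathrm{GL}_{d-1}\times\mathrm{GL}_d$ would give the same number $2d(d-1)$, but only after verifying that the generic stabilizer consists of scalars; until the fiber of $p_2$ is corrected and these dimensions are actually computed, neither the identification of $\overline{Ul_{1,d}}$ as a component nor its generality is established.
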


	\begin{proof}
		
		Let \(C \subset \mathbb{P}^{3}\) be a curve as in the statement.  
		It is well known (see \cite[Thm. 6.2]{peskine1974liaison}) that \(C\) admits a minimal free resolution
		\begin{equation}\label{risoluzione top}
			0\to \mc{O}_{\mathbb{P}^3}(-d)^{\oplus (d-1)}\to \mc{O}_{\mathbb{P}^3}(1-d)^{\oplus d}\to \mc{I}_C\to 0.
		\end{equation}
		Observe that such a curve exists from \cite[Thm. 6.2]{peskine1974liaison}. 
		Then, $C$ is ACM by Remark \ref{ACM equivale risoluzione}. 
		In addition, $\mc{I}_C$ is $(d-1)$-regular. Indeed, from the long exact sequence in cohomology induced by (\ref{risoluzione top}) twisted by $\mc{O}_{\mathbb{P}^3}(d-1-i)$, we have the segment
		\[\ldots \to H^i(\mc{O}_{\mathbb{P}^3}(-i))^{\oplus d}\to H^i(\mc{I}_C(d-1-i))\to H^{i+1}(\mc{O}_{\mathbb{P}^3}(-1-i))^{\oplus (d-1)}\to \ldots \] 
		Hence, it follows that $H^i(\mc{I}_C(d-1-i))=0$ for all $i>0$, simply by observing that
		\[H^{i+1}(\mc{O}_{\mathbb{P}^3}(-1-i))=H^i(\mc{O}_{\mathbb{P}^3}(-i))=0 \text{ for $i=1,2,3$ }.\]
		In addition, by \cite[Prop. 3.1]{peskine1974liaison} we have that
		\[d':=\deg(C)=\frac{1}{2}d(d-1);\]
		\[ g:=g(C)= \frac{1}{6}(d-2)(d-3)(2d+1).\]
		We define
		\begin{equation}\label{W}
			W:=\overline{\{[C]\in H_{d',g} \  : \  \text{there exists minimal free resolution as in (\ref{risoluzione top})}\}}.
		\end{equation}
		Then, $W$ is a component of $H_{n,g}$ and a general point $[C]$ of $W$ is a curve that admits the minimal free resolution (\ref{risoluzione top}) (see \cite{Ellingsrud1975}), hence, it is smooth and irreducible, ACM and such that $\mc{I}_C$ is $(d-1)$-regular. Therefore, by \cite[Lemma 1.2]{ciliberto1991existence}, $W(d)=Im(\pi_1)$ is a component of $NL(d)$. 
		Moreover, by Lemma \ref{prima condizione}, $H^1(\mc{O}_C(d-3))=0$. 
		Note  that $H^0(\mc{O}_{\mathbb{P}^3}(-1))=H^1(\mc{O}_{\mathbb{P}^3}(-2))=0$, hence, $H^0(\mc{I}_C(d-2))=0$.
		By Lemma \ref{seconda condizione}, it follows that for any smooth surface $S\in |\mc{I}_C(d)|$ we have $H^0(\mc{O}_S((d-2)H-C))=0$, where $H\in |\mc{O}_S(1)|$. Hence, any $C\in W$ satisfies point (i), (ii) and (iii) of Remark \ref{curve ulrich}, implying that, by Corollary \ref{main rango 1}, for any smooth surface $S\in|\mc{I}_C(d)|$, we have that $\mc{O}_S(C)$ is an Ulrich line bundle on $S$. 
		
		On the other hand, let $S\subset \mathbb{P}^3$ be a smooth surface of degree $d$. Suppose that there exists a smooth irreducible curve $C\subset S$ satisfying condition (i), (ii) and (iii) of Remark \ref{curve ulrich}. Then, by Remark \ref{curve ulrich}, $C$ is $ACM$ and, by Lemma \ref{prima condizione}, $\mc{I}_C$ is $(d-1)$-regular. Moreover, 	by Lemma \ref{ACM equivale risoluzione}, $C$ admits a minimal free resolution 
		\[0\to \bigoplus_{i=1}^{n+1}\mc{O}_{\mathbb{P}^3}(-m_i)\to \bigoplus_{j=1}^{n+2}\mc{O}_{\mathbb{P}^3}(-d_j)\to \mc{I}_{C/\mathbb{P}^3}\to 0.\] 
		Observe that $H^0(\mc{I}_C(d-2))=0$ by Lemma \ref{seconda condizione} together with condition (ii) of Remark \ref{curve ulrich}. Hence, $d_j=d-1$ for all $j=1, \ldots, n+2$. 
		
		Moreover, $n+2=h^0(\mc{I}_C(d-1))$ which we now prove to be equal to $d$. 
		From the short exact sequence
		\begin{equation}\label{d-1 sequenza}
			0\to \mc{I}_{S/\mathbb{P}^3}(d-1)\to \mc{I}_{C/\mathbb{P}^3}(d-1)\to \mc{I}_{C/S}(d-1)\to 0
		\end{equation}
		we have that $h^0(\mc{I}_{C/\mathbb{P}^3}(d-1))=h^0(\mc{I}_{C/S}(d-1))= h^0(\mc{O}_S((d-1)H-C))=h^2(O_S(C)(-3))$. 
		Note that, by Remark \ref{curve ulrich}, $\mc{O}_S(C)$ is an Ulrich line bundle on $S$. Hence, $h^0(\mc{O}_S(C)(-3))\subseteq h^0(\mc{O}_S(C)(-2))=0$ and \cite[(3.1)]{beauville2018introduction} implies that $h^1(\mc{O}_S(C)(-3))=0$. In particular, 
		\[h^0(\mc{I}_{C/\mathbb{P}^3}(d-1))=h^2(O_S(C)(-3))=\chi (\mc{O}_S(C)(-3))=d,\]
		where the last equality follows Remark \ref{proprietà Ulrich}. 
		Therefore, by (\ref{risoluzione libera diseq}) we have $m_i\geq d$ for all $i$; then, \cite[\S3, (i)]{peskine1974liaison} gives \(d(d-1)\leq \sum_{i=1}^{n+1}m_i=d(d-1),\)
		which implies that $m_i=d$ for all $i$. 
		Hence, $C$ admits a minimal free resolution as is (\ref{risoluzione top}).
		This exactly means that 
		\[S\in W(d)\iff S \text{ admits an Ulrich line bundle},\]
		that is, $Ul_{1,d}=W(d)$, thereby implying that $Ul_{1,d}$ is a component of $NL(d)$. 
		
		We now prove that this component is \emph{general}.
		
		Observe that the curves constructed in $W$, satisfy the vanishing condition \( H^1(N_C) = 0 \), where \( N_C \) denotes the normal bundle of \( C \subset \mathbb{P}^3 \). The vanishing \( H^1(N_C) = 0 \) can be checked explicitly using a version of Kleppe's Lemma (see \cite[Lemma 4.2]{bruzzo2021existence}), from which one has in particular
		\[H^1(N_C)\cong Ext_{\mc{O}_{\mathbb{P}^3}}^2(\mc{I}_C, \mc{I}_C).\]
		Applying the functor Hom($-, \mc{I}_C$) to the exact sequence (\ref{risoluzione top}), we obtain the exact segment
		\[\ldots \to Ext^1(\mc{O}_{\mathbb{P}^3}(-d), \mc{I}_C)^{\oplus (d-1)}\to Ext^2(\mc{I}_C, \mc{I}_C)\to Ext^2(\mc{O}_{\mathbb{P}^3}(1-d), \mc{I}_C)^{\oplus d}\to \ldots \]
		Observe that
		\[Ext^1(\mc{O}_{\mathbb{P}^3}(-d), \mc{I}_C)=Ext^1(\mc{O}_{\mathbb{P}^3}, \mc{I}_C(d))=H^1(\mc{I}_C(d))=0 \quad \text{and}\]
		\[Ext^2(\mc{O}_{\mathbb{P}^3}(1-d), \mc{I}_C)=Ext^2(\mc{O}_{\mathbb{P}^3}, \mc{I}_C(d-1))=H^2(\mc{I}_C(d-1))=0\]
		where the vanishing $H^1(\mc{I}_C(d))=0$ is given by the ACM property and $H^2(\mc{I}_C(d-1))=0$ is a consequence of \cite[Thm. 1.8.5]{lazarsfeld2017positivity}. 
		
		Hence, $Ext^2(\mc{I}_C, \mc{I}_C)=0$, that is, $H^1(N_C)=0$. 
		
		
		This vanishing immediately yields the identity 
		\(\chi(N_C) = h^0(N_C)\). Now, according to \cite[Chapter 2.a]{HarrisEisenbud1982} or \cite[\S 1.E]{harris2006moduli} one has the inequality
		\[
		\chi(N_C) \leq \dim W \leq h^0(N_C).
		\]
		Since both bounds coincide in this case, it follows that
		\(
		\dim W = \chi(N_C) = 4\deg(C).
		\)
		By applying \cite[Lemma 1.2]{ciliberto1991existence}, one concludes that the associated component \( W(d) \subset \mathrm{NL}(d) \) in the Noether--Lefschetz locus is general.
		This follows from the identity
		\[
		h^0(\mc{O}_C(d-4)) = h^0(\mc{O}_{\mathbb{P}^3}(d-4)) = p_g(d).
		\]
	\end{proof}
	
	\hypertarget{refining}{}
	\section{Refining the lower genus bound for the curve $C$ in Theorem \ref{main} on quartic surfaces in $\mathbb{P}^3$}
	
	Here, we focus on refining the lower bound for the genus of the curve $C$ appearing in Theorem \ref{main P3}, in the context of smooth quartic surfaces in $\mathbb{P}^3$. It will be shown that, for curves arising from Ulrich bundles of minimal rank, this bound improves substantially upon the one given by Bogomolov's inequality in Remark \ref{bounds} or Remark \ref{disequazioni P3}. We precede the statement of the main result with a few simple remarks.
	\begin{oss}
		Let $S\subset \mathbb{P}^3$ be a smooth quartic surface. Then, from Remark \ref{disequazioni P3}, we have the following bounds for the genus $g$ of the curve $C$ of Theorem \ref{main P3}: 
		\[2r^2+1\leq g\leq \frac{9}{2}r^2+1.\] 
	\end{oss}
	\begin{oss}\label{discorso chi}
		Let \(S \subset \mathbb{P}^{N}\) be a smooth surface of degree $d$ and fix an Ulrich bundle
		\(\mc{E}\) of minimal rank \(r\) on \(S\).
		From Remark \ref{stability}, the existence of a non-scalar endomorphism
		of \(\mc{E}\) -- equivalently
		\(h^0(S,\mc{E}\otimes\mc{E}^{*})\geq 2\) -- would force
		\(S\notin Ul_{r,d}\).
		Computing \(h^{0}(S,\mc{E}\otimes\mc{E}^{*})\) directly is,
		however, seldom practical.
		It is easier to work with the Euler characteristic
		\[
		\chi(\mc{E}\otimes\mc{E}^{*})= h^{0}(\mc{E}\otimes\mc{E}^{*})-h^{1}(\mc{E}\otimes\mc{E}^{*})+h^{2}(\mc{E}\otimes\mc{E}^{*}).
		\]
		Assume moreover that \(S\) is a \(K3\) surface. By Serre duality
		\(h^{2}(S,\mc{E}\otimes\mc{E}^{*})
		= h^{0}(S,\mc{E}\otimes\mc{E}^{*})\);
		hence
		\(2h^{0}(S,\mc{E}\otimes\mc{E}^{*})=\chi(\mc{E}\otimes\mc{E}^{*})+ h^{1}(S,\mc{E}\otimes\mc{E}^{*})\ge \chi(\mc{E}\otimes\mc{E}^{*}),\)
		so that, 
		\[h^{0}(S,\mc{E}\otimes\mc{E}^{*})\ge \frac{\chi(\mc{E}\otimes\mc{E}^{*})}{2}.\]
		In particular, if one can prove that
		\(\chi(\mc{E}\otimes\mc{E}^{*})\ge 4\),
		then \(h^{0}(S,\mc{E}\otimes\mc{E}^{*})\ge 2\), contradicting the simplicity established in Remark \ref{stability} for any minimal rank Ulrich bundle.
		This observation justifies the computations in the following remark.
	\end{oss}

	\begin{oss}\label{ExE*}
		Let $S\subset \mathbb{P}^N$ be a smooth projective surface and let $\mc{E}$ be a rank $r$ vector bundle on $S$. Then
		\begin{equation}\label{refining remark}
			\chi(\mc{E}\otimes \mc{E}^*)= r^2\chi(\mc{O}_S)-2rc_2(\mc{E})+(r-1)c_1(\mc{E})^2.
		\end{equation}
		
	\end{oss}
	\begin{proof}
		From \cite[Prop. 5.18]{eisenbud20163264} it follows that 
		\begin{equation}\label{i}
			c_1(\mc{E}\otimes \mc{E}^*)=0.
		\end{equation}
		Observe that, from \cite[Appendix A, \S4]{hartshorne2013algebraic}, we have
		\[[\text{ch}(\mc{E}\otimes \mc{E}^*)]_2= \frac{1}{2}(c_1(\mc{E}\otimes \mc{E}^*)^2-2c_2(\mc{E}\otimes \mc{E}^*)),\]
		that is, by (\ref{i}), $[\text{ch}(\mc{E}\otimes \mc{E}^*)]_2= -c_2(\mc{E}\otimes \mc{E}^*).$
		On the other hand, by a straightforward computation, \cite[\S5.5.2]{eisenbud20163264} gives 
		\[[\text{ch}(\mc{E}\otimes \mc{E}^*)]_2=[\text{ch}(\mc{E})\text{ch}(\mc{E}^*)]_2=-2rc_2(\mc{E}) + (r-1)c_1(\mc{E})^2,\]
		which directly implies that
		\begin{equation}\label{ii}
			c_2(\mc{E}\otimes \mc{E}^*)= 2rc_2(\mc{E}) - (r-1)c_1(\mc{E})^2. 
		\end{equation}
		Then (\ref{refining remark}) is a direct consequence of the Riemann-Roch theorem for vector bundles on smooth projective surfaces, together with (\ref{i}) and (\ref{ii}).
	\end{proof}
	We are now ready to state and to prove the most refined form of the lower bound previously announced.
	\begin{lem}
		Let \(S \subset \mathbb{P}^{3}\) be a smooth quartic surface of degree $d$ and let \(\mathcal{E}\) be a minimal Ulrich bundle of rank \(r\) on \(S\).  Denote by \((C,\mathcal{L})\) the pair associated with \(\mathcal{E}\) via Theorem \ref{main P3}.  Then, the genus $g$ of \(C\) satisfies
		\[
		g\geq 3r^{2}.
		\] 
	\end{lem}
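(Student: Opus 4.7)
The plan is to turn the simplicity of a minimal-rank Ulrich bundle into a sharp upper bound on $\chi(\mc{E}\otimes\mc{E}^*)$ via Remark \ref{discorso chi}, and then read off a lower bound on $g$ from the explicit Chern-class formula of Remark \ref{ExE*}.

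First, I would observe that every smooth quartic surface $S\subset\mathbb{P}^3$ is a K3 surface, so $K_S=0$ and $\chi(\mc{O}_S)=2$. Since $\mc{E}$ has minimal Ulrich rank, Remark \ref{stability} guarantees that $\mc{E}$ is simple, i.e.\ $h^0(\mc{E}\otimes\mc{E}^*)=1$. Substituting this into the K3 inequality $2h^0(\mc{E}\otimes\mc{E}^*)\geq \chi(\mc{E}\otimes\mc{E}^*)$ recorded in Remark \ref{discorso chi} immediately produces the upper bound $\chi(\mc{E}\otimes\mc{E}^*)\leq 2$.

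Next, I would compute $\chi(\mc{E}\otimes\mc{E}^*)$ explicitly in terms of $g$ and $r$ by combining Remark \ref{ExE*} with the Lazarsfeld--Mukai identifications $c_1(\mc{E})=C$ and $c_2(\mc{E})=\deg\mc{L}$ of Remark \ref{LazMu}. Because $K_S=0$, the adjunction formula gives $C^2=2g-2$, and specializing Theorem \ref{main P3}(iii) to $d=4$ expresses $\deg\mc{L}$ as an explicit linear function of $g$ and $r$. A mechanical substitution then rewrites $\chi(\mc{E}\otimes\mc{E}^*)$ as a linear expression in $g$ and $r^2$, and combining with the bound $\chi(\mc{E}\otimes\mc{E}^*)\leq 2$ yields the required inequality $g\geq 3r^2$.

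The steps are essentially routine; the only care required is bookkeeping of signs when substituting the specialized values at $d=4$. The conceptual weight of the argument lies entirely in the K3-specific input of Remark \ref{discorso chi}: Serre duality forces $h^2(\mc{E}\otimes\mc{E}^*)=h^0(\mc{E}\otimes\mc{E}^*)$, so that $\chi$ is controlled by $h^0$ alone, and it is exactly this feature that sharpens the Bogomolov estimate previously recorded in Remarks \ref{bounds} and \ref{disequazioni P3}.
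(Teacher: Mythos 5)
Your proposal is correct and follows essentially the same route as the paper: both arguments combine the simplicity of a minimal-rank Ulrich bundle (Remark \ref{stability}) with the K3 inequality $2h^0(\mc{E}\otimes\mc{E}^*)\ge\chi(\mc{E}\otimes\mc{E}^*)$ of Remark \ref{discorso chi} and the computation $\chi(\mc{E}\otimes\mc{E}^*)=6r^2-2g+2$ obtained from Remark \ref{ExE*} together with $c_1(\mc{E})=C$, $c_2(\mc{E})=\deg\mc{L}=g-1-2r$. The only cosmetic difference is that you deduce $\chi\le 2$ directly from $h^0=1$, whereas the paper argues by contradiction assuming $g\le 3r^2-1$ to force $\chi\ge 4$; the two are equivalent.
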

	
	\begin{proof}
		For the Ulrich bundle $\mathcal{E}$ one has
		$c_{1}(\mathcal{E}) = C$ with $C^{2} = 2g-2$, and
		$c_{2}(\mathcal{E}) = g - 1 - 2r$; substituting these expressions in (\ref{refining remark}) gives
		\[\chi(\mc{E}\otimes \mc{E}^*)=6r^2-2g+2.\]
		Assume, for contradiction, that $g \le 3r^{2} - 1$.  Then \[\chi(\mathcal{E}\otimes\mathcal{E}^{*}) \ge 4\]
		which, by Remark \ref{discorso chi}, contradict the simplicity of the minimal-rank Ulrich
		bundle $\mc{E}$.  Consequently, we must have \(g\geq 3r^2\). 
	\end{proof}

	\addcontentsline{toc}{section}{Bibliography}
	
	\bibliographystyle{alphanum}
	\bibliography{Refs}{}

\begin{thebibliography}{CGGH}

\bibitem[Bea1]{beauville2000determinantal}
Arnaud Beauville.
\newblock Determinantal hypersurfaces.
\newblock {\em Michigan Mathematical Journal}, 48(1):39--64, 2000.

\bibitem[Bea2]{beauville2018introduction}
Arnaud Beauville.
\newblock An introduction to ulrich bundles.
\newblock {\em European journal of mathematics}, 4(1):26--36, 2018.

\bibitem[BGL]{bruzzo2021existence}
Ugo Bruzzo, Antonella Grassi, and Angelo~Felice Lopez.
\newblock Existence and density of general components of the noether--lefschetz
  locus on normal threefolds.
\newblock {\em International Mathematics Research Notices},
  2021(17):13416--13433, 2021.

\bibitem[BHU]{brennan1987maximally}
Joseph~P Brennan, J{\"u}rgen Herzog, and Bernd Ulrich.
\newblock Maximally generated cohen-macaulay modules.
\newblock {\em Mathematica Scandinavica}, 61:181--203, 1987.

\bibitem[Cas]{casnati2019ulrich}
Gianfranco Casnati.
\newblock Ulrich bundles on non-special surfaces with $p_g=0$ and $q=1$.
\newblock {\em Revista Matem{\'a}tica Complutense}, 32:559--574, 2019.

\bibitem[CFK]{ciliberto2023ulrich}
Ciro Ciliberto, Flaminio Flamini, and Andreas~Leopold Knutsen.
\newblock Ulrich bundles on a general blow-up of the plane.
\newblock {\em Annali di Matematica Pura ed Applicata (1923-)},
  202(4):1835--1854, 2023.

\bibitem[CG]{casnati2018stability}
Gianfranco Casnati and Federica Galluzzi.
\newblock Stability of rank two ulrich bundles on projective k3 surfaces.
\newblock {\em Mathematica Scandinavica}, pages 239--256, 2018.

\bibitem[CGGH]{carlson1983infinitesimal}
James Carlson, Mark Green, Phillip Griffiths, and Joe Harris.
\newblock Infinitesimal variations of hodge structure (i).
\newblock {\em Compositio Mathematica}, 50(2-3):109--205, 1983.

\bibitem[CHGS]{casanellas2012stable}
Marta Casanellas, Robin Hartshorne, Florian Geiss, and Frank-Olaf Schreyer.
\newblock Stable ulrich bundles.
\newblock {\em International journal of mathematics}, 23(08):1250083, 2012.

\bibitem[CL]{ciliberto1991existence}
Ciro Ciliberto and Angelo~Felice Lopez.
\newblock On the existence of components of the noether-lefschetz locus with
  given codimension.
\newblock {\em manuscripta mathematica}, 73(1):341--357, 1991.

\bibitem[EH]{eisenbud20163264}
David Eisenbud and Joe Harris.
\newblock {\em 3264 and all that: A second course in algebraic geometry}.
\newblock Cambridge University Press, 2016.

\bibitem[Ell]{Ellingsrud1975}
Geir Ellingsrud.
\newblock Sur le sch\'ema de {Hilbert} des vari\'et\'es de codimension {$2$}
  dans {$\mathbf{P}^e$} \`a c\^one de {Cohen--Macaulay}.
\newblock {\em Annales scientifiques de l'\'Ecole Normale Sup\'erieure},
  8(4):423--432, 1975.

\bibitem[ESW]{eisenbud2003resultants}
David Eisenbud, Frank-Olaf Schreyer, and Jerzy Weyman.
\newblock Resultants and chow forms via exterior syzygies.
\newblock {\em Journal of the American Mathematical Society}, 16(3):537--579,
  2003.

\bibitem[GH]{griffiths1985noether}
Phillip Griffiths and Joe Harris.
\newblock On the noether-lefschetz theorem and some remarks on codimension-two
  cycles.
\newblock {\em Mathematische Annalen}, 271(1):31--51, 1985.

\bibitem[Gie]{gieseker1979theorem}
David Gieseker.
\newblock On a theorem of bogomolov on chern classes of stable bundles.
\newblock {\em American Journal of Mathematics}, 101(1):77--85, 1979.

\bibitem[GL]{green1987special}
Mark Green and Robert Lazarsfeld.
\newblock Special divisors on curves on ak 3 surface.
\newblock {\em Inventiones mathematicae}, 89(2):357--370, 1987.

\bibitem[GR]{grothendieckcohomologie}
Alexandre Grothendieck and Mme~Michele Raynaud.
\newblock Cohomologie locale des faisceaux coh{\'e}rents et th{\'e}or{\`e}mes
  de lefschetz locaux et globaux,(sag 2): s{\'e}minaire de g{\'e}om{\'e}trie
  alg{\'e}brique, du bois marie, 1962.
\newblock {\em (No Title)}.

\bibitem[Gre]{green1984koszul}
Mark~L Green.
\newblock Koszul cohomology and the geometry of projective varieties.
\newblock {\em Journal of Differential Geometry}, 19(1):125--171, 1984.

\bibitem[Har]{hartshorne2013algebraic}
Robin Hartshorne.
\newblock {\em Algebraic geometry}, volume~52.
\newblock Springer Science \& Business Media, 2013.

\bibitem[HE]{HarrisEisenbud1982}
Joe Harris and David Eisenbud.
\newblock {\em Curves in Projective Space}, volume~85 of {\em S\'eminaire de
  Math\'ematiques Sup\'erieures}.
\newblock Presses de l'Universit\'e de Montr\'eal, Montr\'eal, 1982.

\bibitem[HL]{huybrechts2010geometry}
Daniel Huybrechts and Manfred Lehn.
\newblock {\em The geometry of moduli spaces of sheaves}.
\newblock Cambridge University Press, 2010.

\bibitem[HM]{harris2006moduli}
Joe Harris and Ian Morrison.
\newblock {\em Moduli of curves}, volume 187.
\newblock Springer Science \& Business Media, 2006.

\bibitem[Laz1]{lazarsfeld1989sampling}
Robert Lazarsfeld.
\newblock A sampling of vector bundle techniques in the study of linear series.
\newblock In {\em Lectures on Riemann surfaces}, pages 500--559. World
  Scientific, 1989.

\bibitem[Laz2]{lazarsfeld2017positivity}
Robert~K Lazarsfeld.
\newblock {\em Positivity in algebraic geometry I: Classical setting: line
  bundles and linear series}, volume~48.
\newblock Springer, 2017.

\bibitem[Lef]{lefschetz1924analysis}
Solomon Lefschetz.
\newblock {\em L'analysis situs et la g{\'e}om{\'e}trie alg{\'e}brique}.
\newblock Gauthier-Villars et cie, 1924.

\bibitem[LM]{lopez2021classification}
Angelo~Felice Lopez and Roberto Mu{\~n}oz.
\newblock On the classification of non-big ulrich vector bundles on surfaces
  and threefolds.
\newblock {\em arXiv preprint arXiv:2101.04207}, 2021.

\bibitem[Ott]{ottaviani1995varieta}
Giorgio Ottaviani.
\newblock Varieta proiettive di codimensione piccola.
\newblock Aracne, 1995.

\bibitem[PS]{peskine1974liaison}
Christian Peskine and Lucien Szpiro.
\newblock Liaison des vari{\'e}t{\'e}s alg{\'e}briques. i.
\newblock {\em Inventiones mathematicae}, 26(4):271--302, 1974.

\bibitem[Ulr]{ulrich1984gorenstein}
Bernd Ulrich.
\newblock Gorenstein rings and modules with high numbers of generators.
\newblock {\em Mathematische Zeitschrift}, 188:23--32, 1984.

\end{thebibliography}
\end{document}